\newtheorem{theorem}{Theorem}[section]
\newtheorem{proposition}[theorem]{Proposition}
\newtheorem{lemma}[theorem]{Lemma}
\newtheorem{corollary}[theorem]{Corollary}
\theoremstyle{definition}
\newtheorem{definition}[theorem]{Definition}
\theoremstyle{remark}
\newtheorem{remark}[theorem]{Remark}
\newtheorem{example}[theorem]{Example}
\newcommand{\rank}{\operatorname{rank}}
\newcommand{\D}{\operatorname{d}\!}
\newcommand{\CC}{\mathbb{C}}
\newcommand{\QQ}{\mathbb{Q}}
\newcommand{\NN}{\mathbb{N}}
\newcommand{\RR}{\mathbb{R}}
\newcommand{\Sing}{\operatorname{Sing}}
\newcommand{\corank}{\operatorname{corank}}
\newcommand{\id}{\operatorname{id}}
\newcommand{\Inst}{\operatorname{Inst}}
\newcommand{\sign}{\operatorname{sign}}
\newcommand{\Alt}{\operatorname{Alt}}
\date{}
\title{Kato-Matsumoto-type results for disentanglements}
\author[G. Pe\~nafort Sanchis]{G. Pe\~nafort Sanchis$^\dagger$}
\author[M. Zach]{M. Zach$^\heartsuit$}
\address[$^\dagger$]{Basque Center for Applied Mathematics\\
Mazarredo Zumarkalea, 14\\ 48009 Bilbo, Spain}
\address[$\heartsuit$]{Institut f\"ur Mathematik \\
FB 08 - Physik, Mathematik und Informatik \\
Johannes Gutenberg-Universit\"at \\
Staudingerweg 9, 4. OG \\
55128 Mainz, Germany}
\begin{document}

\maketitle

\begin{abstract}
  \noindent
  We consider the possible 
  disentanglements of holomorphic map germs $f \colon (\CC^n,0) \to (\CC^N,0)$,
  $0<n < N$, with nonisolated locus of instability $\Inst(f)$. The aim is to achieve lower 
  bounds for their (homological) connectivity in terms of $\dim \Inst(f)$. 
  Our methods  apply in the case of corank $1$.\\

  \noindent
  MSC classification: \subjclass{58K15, 58K60, 32S30}.  \\
  \noindent
  Keywords: \keywords{Vanishing homology, disentanglements, connectedness}.
\end{abstract}

\section{Introduction}

Let $g\colon (\CC^{n+1},0)\to (\CC,0)$ be a holomorphic function germ defining an isolated hypersurface singularity $Z=g^{-1}(\{0\})$. The Milnor fibre $M$ has the homotopy type of a wedge of $n$-dimensional real spheres, see \cite{Milnor68}. In particular, the reduced homology of $M$ is concentrated 
in degree $n$. Since its appearance, this result has been subject to several generalizations and modifications. We are interested in the connections between two modifications of the original setup, which we refer to as the \emph{fibration} setup and the \emph{parametrization} setup.

The fibration setup generalizes the original one in two ways. In the first one, $Z$ is a higher codimension complete intersection with isolated singularity. It is a result of Hamm \cite{Hamm72} that $M$ still has the homotopy type of a bouquet of spheres of real dimension $\dim Z$. Alternatively, one can ask $Z=g^{-1}(\{0\})$ to be a hypersurface but let the singular locus have dimension $d$. Kato and Matsumoto showed in \cite{KatoMatsumoto75} that the Milnor fibre of $g$ is at least $(n-d-1)$-connected.
The fact that this holds also for non-isolated complete intersection singularities has been part of the folklore. For lack of a reference, we give a proof in Section \ref{sec:Non-IsoCompleteInter}.

In the parametrization setup, the space $Z$ is the image of a finite holomorphic map germ $f\colon (\CC^n, 0) \to (\CC^N,0)$, and one would like to understand the changes in topology after perturbing $f$ to a stable map. The map germs with isolated instabilities are the $\mathcal A$-finite map germs, and the image of a good representative of a stabilization of $f$ is called a disentanglement.  Thinking of the fibrations and parametrization settings as different instances of the general  deformation theory reveals interesting connections. Milnor's original setting, dealing with isolated hypersurface singularities, corresponds to that of $\mathcal A$-finite parametrization $f\colon (\CC^n,0)\to (\CC^{n+1},0)$.  D. Mond showed in \cite{Mond91} that the disentanglement has the homotopy type of a bouquet of $n$-spheres. The number of spheres in the bouquet is called the \emph{image Milnor number} $\mu_I(f)$. Another illustrating example is Mond's conjecture. It claims the inequality $\mu_I(f)\geq \mathcal A_e{\rm -codim}(f)$ -- motivated, by analogy, from the trivial inequality $\mu\geq \tau$ for the Milnor and Tjurina numbers of hypersurfaces with isolated singular locus. Moving to higher codimension, the setting of Hamm's result corresponds to that of $\mathcal A$-finite map germs $(\CC^n,0)\to (\CC^{N},0)$ with $N>n+1$. The degrees containing the non-trivial rational homology were determined for corank one germs by Goryunov and Mond in \cite{GoryunovMond93}; the result being generalized to arbitrary corank by Houston in \cite{Houston:1997}. In this work, we show the analog of Kato and Matsumoto's result for disentanglements.

 The restriction we have 
to make is to consider only the case of corank $1$ germs with 
multiple point spaces of the expected dimension. 
In this situation the latter are complete intersections  
and we can study their  induced deformations 
by means of the generalized Kato-Matsumoto Theorem.
Then, we apply a spectral sequence argument due to Goryunov and Mond \cite{GoryunovMond93} 
to compute the rational homology of the disentanglement. 

Just as in \cite{GoryunovMond93}, passing through the spectral sequence  looses 
the information about the homotopy type of the disentanglement as well 
as its integer homology groups. 

\subsection*{Acknowledgements}
The first author was partially supported by the ERCEA 615655 NMST Consolidator Grant and 
by the Basque Government through the BERC 2014-2017 program and by Spanish Ministry of Economy and
Competitiveness MINECO: BCAM Severo Ochoa excellence accreditation SEV-2013-0323.
The second author wishes to thank for the kind hospitality during a stay at the University of Valencia, during which the work on this subject was initiated.

\section{Overview and results}We briefly outline the main theorem of this article and recall 
the common definitions of the objects involved.
Consider a finite, holomorphic map germ 
\[
  f \colon  (\CC^n,0) \to (\CC^N,0).
\]
Let $( \Inst(f),0 ) \subset ( \CC^N,0)$ be the instability locus of $f$ (see Definition \ref{defStability}) and let $d = \dim (\Inst(f),0)$. 
We will mostly be concerned with non-isolated instability, i.e. 
the case $d>0$. 
We may choose an \textit{unfolding}
\[
  F \colon  (\CC^n,0) \times (\CC,0) \to (\CC^N,0)\times (\CC,0), \quad 
  (x,u) \mapsto (f_u(x),u)
\]
of $f = f_0$
and a \textit{good, proper representative} 
$F \colon  X \times U \to Y \times U$ thereof in a sense made precise below (cf. Definition 
\ref{def:StabilizationAndStablePerturbation} and Section \ref{sec:FibrationTheorems}). 
Using stratification theory and Thom's isotopy lemmas one can show that 
the image $F(X)$ is a topological fiber bundle 
with fiber $f_u(X)$ over all $u\neq 0$ in some small disc $D \subset U$ around 
the origin. 
In case that $F$ is a $C^0$-stabilization (see Definition \ref{defStability}) 
of $f$, we call the space 
$Z_u:=f_u(X)$ a \textit{disentanglement} of $f$. However, since our 
considerations also involve multiple point spaces and their induced deformations,
we will have to chose the representatives accordingly.
Also note that, contrary to the case of isolated 
instability, our notion of Milnor fiber really depends on the chosen unfolding $F$ and not 
only on the map germ $f$. 

\begin{theorem}
  In the setup described above,
  the reduced homology with rational coefficients 
  $\tilde H_q( Z_u, \QQ)$ 
  of a disentanglement $Z_u = f_u(X)$ of a dimensionally correct 
  corank $1$ map germ 
  $f \colon  (\CC^n,0) \to (\CC^N,0)$ 
  can only be nonzero in degrees $q$ with 
  \[
    q= k (n+1) - (k-1) N - s
  \]
  for $1<k\leq \left\lfloor \frac{N}{N-n} \right\rfloor$ and $0\leq s \leq \dim \Inst(f)$.
  In the particular case $N = n+1$ one has 
  \[
    n-\dim \Inst(f) \leq q \leq n
  \]
  as the range for possibly nonzero Betti numbers.
   \label{thm:MainTheorem}
\end{theorem}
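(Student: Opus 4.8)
The plan is to reduce the statement to a computation with the multiple point spaces of $f$ and the generalized Kato--Matsumoto theorem. For a corank $1$ germ the $k$-th multiple point space $D^k(f)$ is, following the work of Marar and Mond, realized inside a smooth space of dimension $n+k-1$ as the zero scheme of $(k-1)(N-n+1)$ iterated divided differences of the components of $f$; when $f$ is dimensionally correct it is therefore a complete intersection of the expected dimension $n_k := (n+k-1)-(k-1)(N-n+1) = N-k(N-n)$, which is nonnegative exactly for $2\le k\le\lfloor N/(N-n)\rfloor$ (for $k=1$ one has $D^1(f)=\CC^n$). First I would fix a stabilization $F$ of $f$ together with a good, proper representative, chosen so that the induced families $D^k(F)$ over the parameter disc are again dimensionally correct complete intersections. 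Since $f_u$ is stable for $u\neq 0$, the Marar--Mond stability criterion forces each $D^k(f_u)$ to be smooth of dimension $n_k$, so $D^k(f_u)$ is a smoothing of the complete intersection singularity $D^k(f)$ and, being a closed analytic subspace of a Stein open set, carries the homotopy type of a CW complex of dimension at most $n_k$.

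The core of the argument is then to control the topology of $D^k(f_u)$ by the generalized Kato--Matsumoto theorem of Section~\ref{sec:Non-IsoCompleteInter}, for which one needs a bound on $\dim\Sing D^k(f)$. Here the key observation is that the natural finite map $\epsilon_k\colon D^k(f)\to\CC^N$, sending a $k$-tuple of points of a fibre of $f$ to their common image, carries $\Sing D^k(f)$ into $\Inst(f)$: were a point $\xi\in D^k(f)$ to lie over some $y\in\CC^N$ at which $f$ is stable, then --- stability of the multigerm of $f$ over $y$ being equivalent, for dimensionally correct corank $1$ multigerms, to smoothness of all of its multiple point schemes --- $D^k(f)$ would already be smooth at $\xi$. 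As $\epsilon_k$ is finite this gives $\dim\Sing D^k(f)\le\dim\Inst(f)$, and the generalized Kato--Matsumoto theorem then shows that $D^k(f_u)$ is $(n_k-\dim\Inst(f)-1)$-connected. Combined with the dimensional bound of the previous step, $\tilde H_q(D^k(f_u);\QQ)$ can be nonzero only in the range $n_k-\dim\Inst(f)\le q\le n_k$.

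It remains to assemble these pieces through the Goryunov--Mond image computing spectral sequence \cite{GoryunovMond93} for the finite corank $1$ map $f_u$. Its $E^1$-page has the alternating part $\tilde H_q(D^k(f_u);\QQ)^{\Alt}$ of the reduced homology of the $k$-th ordered multiple point space in the $(k-1)$-st column and converges to $\tilde H_{(k-1)+q}(Z_u;\QQ)$. The $k=1$ column is the reduced homology of the contractible source of $f_u$ and vanishes, and for $2\le k\le\lfloor N/(N-n)\rfloor$ the preceding step concentrates the $k$-th column in the rows $q$ with $n_k-\dim\Inst(f)\le q\le n_k$. Hence, no matter how the differentials act, $\tilde H_q(Z_u;\QQ)$ can be nonzero only for total degrees $q=(k-1)+r$ with $(k-1)+n_k-\dim\Inst(f)\le (k-1)+r\le (k-1)+n_k$; since $(k-1)+n_k=k(n+1)-(k-1)N-1$, letting $k$ run over $2,\dots,\lfloor N/(N-n)\rfloor$ produces the asserted list of admissible degrees, and when $N=n+1$ every such interval degenerates to $[n-\dim\Inst(f),\, n]$.

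The main difficulty is concentrated in the middle step, and this is exactly where the corank $1$ and dimensional-correctness hypotheses are genuinely used. Two points must be secured: first, that a stabilization of $f$ can be represented so that, simultaneously for all relevant $k$, the induced deformation of $D^k(f)$ is a bona fide smoothing to which the generalized Kato--Matsumoto connectivity estimate applies; and second, the precise dictionary, for dimensionally correct corank $1$ multigerms, between instability of $f$ and non-smoothness of its multiple point schemes --- it is this dictionary that binds $\dim\Sing D^k(f)$ to $\dim\Inst(f)$. In higher corank the multiple point spaces are in general not complete intersections, so neither the Kato--Matsumoto input nor this last reduction survives, which is why the method is confined to the corank $1$ case.
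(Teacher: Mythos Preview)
Your proposal is correct and follows essentially the same route as the paper: the Marar--Mond description of $D^k(f)$ as a complete intersection, the bound $\dim\Sing D^k(f)\le\dim\Inst(f)$ via finiteness of $\epsilon^k$ over $\Inst(f)$, the generalized Kato--Matsumoto theorem for the induced smoothings, and the Goryunov--Mond spectral sequence to read off the admissible degrees. The only point the paper treats in more detail than you do is the one you flag yourself at the end --- choosing a single Milnor ball so that the representatives of all the $D^k(F)$ are good simultaneously (their Section~\ref{sec:InducedDeformationsOfMultiplePointSpaces}); note also that your computation $(k-1)+n_k=k(n+1)-(k-1)N-1$ agrees with the paper's spectral sequence diagram and with the special case $N=n+1$, so the apparent off-by-one against the displayed formula in the statement is a typo there, not in your argument.
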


Two more points in the statement of this theorem need clarification. The first one is 
the notion for a map germ $f$ to be \textit{dimensionally correct}. This means that the  multiple point spaces of $f$ have the expected dimension, as 
explained in Definition \ref{defDimensionallyCorrect}. The second one is the condition on 
$f$ to have corank $1$. The corank of a map germ centered at the origin is defined as 
\begin{equation}
  \corank f = n - \rank \D f(0), 
  \label{eqn:DefinitionCorank}
\end{equation}
where, as usual, $\D f$ denotes the differential of $f$. The dimensionally correct and corank $\leq 1$ conditions in the statement of Theorem \ref{thm:MainTheorem} ensure that the multiple point spaces are complete intersections of the expected dimensions, with singularities only over the instability locus; and that the multiple point spaces of a stable perturbation are smoothings of these complete intersections. This is explained in \cite{MararMond89}, which, together with \cite{GoryunovMond93}, forms the basis of this article. We close this section by giving some introductory examples:

\begin{example}\label{exCuspidalEdge}
The Cuspidal Edge 
\[S_\infty\colon (x,u)\mapsto (x^2,x^3,u),\]
admits deformations to the germs 
\[S_k\colon (x,u)\mapsto (x^2,x^3+u^{k+1}x,u), \quad k\geq 0.\]
and to their stable perturbations, see Figure \ref{fig:Example1}. 
For each $k$, these deformations may be obtained by taking suitable curves in the parameter space of the unfolding
 \[(x,u,a,b)\mapsto(x^2,x^3+x(a+bu^{k+1}),u,a,b).\]

The curve with parameter $t$ given by $t\mapsto(a,b)=(0,t)$ gives a family with generic fiber $S_k$ and special fiber $S_{\infty}$. Fixing $a=t^2$ and $b=t$ produces stable perturbations of $S_\infty$, with a different second Betti number for each $k$.
 The statement on the Betti numbers follows from the fact that the generic maps on these families are precisely the stable perturbations of $S_k$ (the ones obtained by fixing a small enough $b\neq 0$ and letting $a=t$). Since $S_k$ has $\mathcal A_e$-codimension $k$, and Mond's conjecture holds for germs $(\mathbb C^2,0)\to (\mathbb C^3,0)$, the second Betti number of the image of these stable perturbations is $k$.

The germ $S_\infty$ also admits deformations to the ``nodal edge''
\[(x,u)\mapsto (x^2,x^3+tx,u),\]
which can be retracted to a simple node curve. This gives a stable perturbation of a germ of a 
parametrized surface singularity having non trivial homology in dimension 1.

\end{example}
\begin{figure}[h]
  \centering
  \includegraphics[scale=1.05]{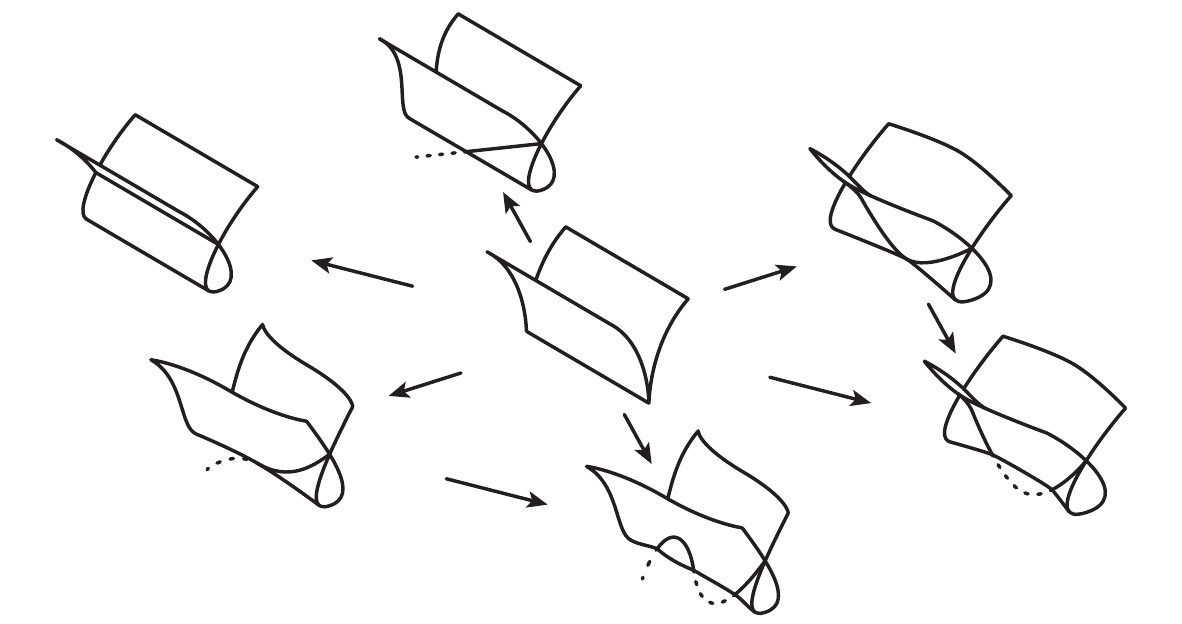}
  \caption{Unfoldings of the Cuspidal Edge}
  \label{fig:Example1}
\end{figure}

\begin{example}\label{exDeformationsBInfty}
The singularity 
\[B_\infty\colon (x,u)\mapsto(x^2,u^2x,u)\]
 admits deformations to the $\mathcal A$-finite singularities
\[B_k\colon (x,u)\mapsto(x^2,u^2x+x^{2k+1},u), \quad k\geq 1,\]
see Figure \ref{fig:Example2}.
Each $B_k$ has codimension $k$, and thus admits a deformation to a singularity whose image has the homotopy type of a wedge of $k$ spheres of dimension two.
We shall illustrate this by taking suitable curves in the parameter space of the deformation
\[(x,u)\mapsto(x^2,u^2x+x(a+b x^2+cx^4),u),\]
with coordinates $(a,b,c)\in \CC^3$.

The curve $t\mapsto (a,b,c) = t\cdot (1/10,-1,1)$ gives, for a small non zero parameter $t$, a stable perturbation
$f_t$, depicted at the left top corner of Figure \ref{fig:Example1},
which exhibits non-trivial homology in dimensions 1 and 2.

$t\mapsto (0,t,0)$ yields, for each $t\neq 0$, a singularity of type
$B_1$.  The curve $t\mapsto (-t,1,0)$ gives stable perturbations of $B_1$ which
exhibit two cross-caps and homology in dimension 2. The curve $t\mapsto
(-t,t,0)$ is similar, but it collapses to $B_\infty$.

 Similarly, the curve $t\mapsto (0,0,t)$ yields a singularity of type $B_2$ for each $t\neq 0$. The curve $t\mapsto (-t,0,1)$ consists of stable perturbations of $B_2$, with two cross-caps and rank of $H_2(Z_t)=2$ (One of the cycles cannot be seen on the real numbers.  You may think of this cycle, however, as obtained by glueing two disks by their boundaries, along the non-visible curves that continue the visible double-point curves after the cross-caps).

In all the processes above, one always chooses a suitable representative 
of the unfolding and a Milnor ball $B$, which in turn restricts the range 
of admissible parameters in the unfolding. 
This will be 
made precise in Section \ref{sec:MilnorFibrations}, but is already 
indicated in the illustrations in Figure \ref{fig:Example1}.
We would like to remark on the importance of the choices of 
neighbourhoods for the different disentanglements. 
Suppose the ball $B$ around the origin in $Z_0 \subset \CC^3$ has been 
chosen to allow the necessary range $t \in [0,1] \subset \RR$ in the deformations 
to $B_1$ and $B_2$ individually. Then, nevertheless, the curve 
\[
  [0,1] \to \CC^3, \quad
  t\mapsto (a,b,c) = (t(t-1),1-t,t)
\]
 \textit{cannot} be an admissible family with the given choice of $B$. 
Over $t \in (0,1)$ one has only stable fibers, which suggests 
that restricted to this range of parameters one has a trivial 
family. However, this is not the case: As we saw before, one has 
$h_2(Z_t\cap B) = 1$ for $t$ close to $0$, but 
$h_2(Z_t \cap B) = 2$ for $t$ close to $1$. 
In fact, one of the two cycles generating the second homology 
group grows as $t$ decreases from $1$ to $0$ and eventually 
leaves the chosen ball $B$. 
\begin{figure}[h]
  \centering \includegraphics[scale=1]{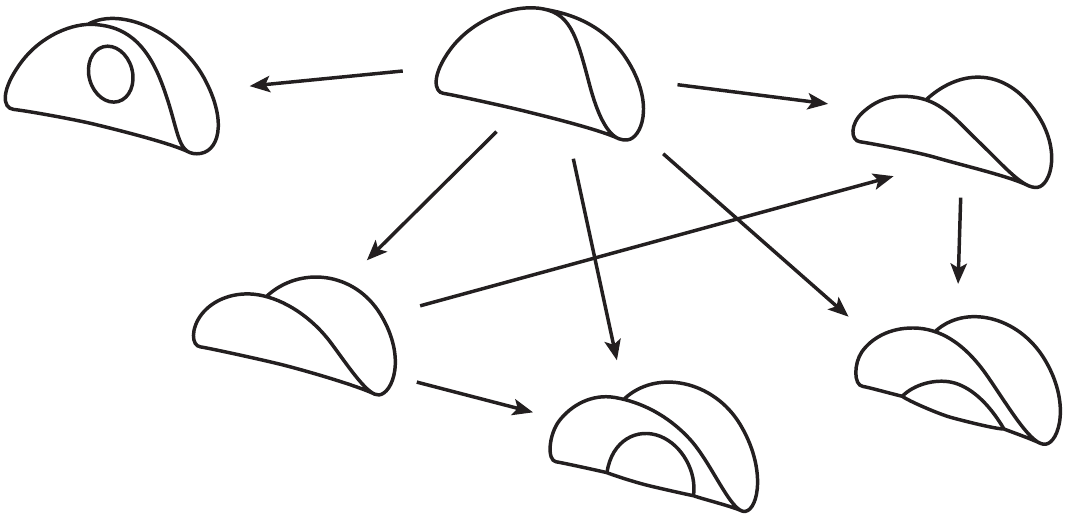}
  \caption{Some adjacencies around $B_\infty$} 
  \label{fig:Example2}
\end{figure}
\end{example}

The nodal edge as a stable perturbation of the cuspidal edge is a particular case of the following construction:
\begin{example}
One can produce map germs $f\colon (\CC^n,0)\to (\CC^{n+1},0)$ with  instability locus of dimension $d$ and a stable perturbation $f_s$ having non-trivial homology at dimension $n-d$, as follows: Choose any $\mathcal A$-finite unstable germ $g\colon (\CC^{n-d},0)\to(\CC^{n-d+1},0)$ of corank one and $g_s$ a stabilization of $g$. Now let $f$ and $f_s$ be $d$-parametric trivial unfoldings of $g$ and $g_s$. Since $g$ is $\mathcal A$-finite but not stable, it follows that the instability locus of $g$ is the origin. On the one hand, the space $T=\CC^d\times\{0\}$ corresponding to the trivial parameters of $f$ must be contained in the instability locus of $f$, because it is non trivially unfolded by $f_s$, just as $g$ is non trivially unfolded by $g_s$ at the origin. On the other hand, unfoldings of stable germs are stable, because their unfoldings are also unfoldings of the original germ and hence trivial. From this it follows that the instability locus of $f$ is $T$, and that $f_s$ is a stabilization of $f$.

Since $(g_s,s)$ is a stabilization of a finitely determined non stable map, the associated stable perturbation has the homotopy type of a bouquet of spheres of dimension $n-d$. Our claim follows because $(f_s,s)$ is a trivial unfolding of $(g_s,s)$, and therefore the image of $g_s$ is a deformation retract of the image of $f_s$.
\end{example}

\section{Preliminaries}
\subsection{$\mathcal A$-equivalence and unfoldings}
\label{sec:AEquivalenceAndUnfoldings}

In this work, maps and (multi-)germs of maps are considered
up to $\mathcal A$-equivalence, i.e. 
biholomorphisms in source and target. 
Standard references for this are \cite{TopStab76} and \cite{Golu-Gui}.

Throughout the text, a (multi-)germ is a germ of map the form $f\colon (\CC^n,S) \to (\CC^N,y)$, where $S$ is a finite set and $f(S)=\{y\}$. Two map germs 
\[
  f, g\colon (\CC^n,S) \to (\CC^N,0)
\]
are called $\mathcal A$-equivalent if there exist germs of biholomorphism 
\[
  \Phi \colon  (\CC^n,S) \to (\CC^n,S), \quad \Psi \colon  (\CC^N,0) \to (\CC^N,0),
\]
such that the following diagram commutes:
\[
  \xymatrix{
    (\CC^n,S) \ar[d]_\Phi \ar[r]^{f} & 
    (\CC^N,0) \ar[d]^{\Psi} \\ 
    (\CC^n,S) \ar[r]^g & 
    (\CC^N,0)
  }.
\]
Given such a germ $f \colon (\CC^n,0) \to (\CC^N,0)$ 
an \textit{unfolding} of $f$ on $r$ parameters is a map germ 
\[
  F \colon  (\CC^n \times \CC^r,(0,0)) \to ( \CC^N \times \CC^r,(0,0)), \quad
  (x,u) \mapsto (f_u(x), u)
\]
such that for $u = 0$ we obtain $f_0 = f$. 
Two unfoldings are called \textit{$\mathcal A$-equivalent as unfoldings}
if they are $\mathcal A$-equivalent via two germs of biholormorphism
of the form $\Phi=\phi\times \id_{\CC^r}$ and $\Psi=\psi\times
\id_{\CC^r}$.  An unfolding $F$ of $f$ is called trivial if it is
$\mathcal A$-equivalent, as an unfolding, to the trivial unfolding
$f\times \id_{\CC^r}$.

There are obvious analogous statements for finite maps 
$f \colon  X \to Y$ between complex manifolds.
Also, replacing biholomorphisms by homeomorphisms, we obtain the
definitions of topological $\mathcal A$-equivalence and topologically
trivial unfoldings.

\begin{definition}\label{defStability}
  A multi-germ $f\colon (X,S) \to (Y,0),$ 
  is stable if every unfolding $F$ of $f$ is trivial. A
  finite map $f\colon  X \to Y$ is stable at a point $y \in Y$ if the germ $f \colon 
  (X,f^{-1}(\{y\})) \to (Y,y)$ is stable. Finally, a finite map is
  \emph{stable} if it is stable at every point.  Topological stability is
  defined in the obvious analogous manner.
  
The \emph{instability locus} $\Inst(f)$ of a finite map is the subspace of points $y\in Y$ where the multi-germ of $f$ at $S=f^{-1}(\{y\})$ is not stable.  It is well known that $\Inst(f)$ is a closed complex subspace of $Y$.
  \end{definition}

\begin{definition} 
  \label{def:StabilizationAndStablePerturbation}
  A \emph{stabilization} of a germ $f$ is a one-parameter
  unfolding of $f$, having a representative 
  \[F=(f_u,u)\colon  X\times U  \to Y \times U\] so that 
  $f_s$ is stable for all $s\in U\setminus\{0\}$. Topological stabilizations are defined analogously.

  Let $Z=F(X\times U)$. By shrinking $X, Y$ and $U$, we may ask $F$ to satisfy the following conditions:
  \begin{enumerate}[i)]
    \item the family $\pi\colon  Z\to U$ is a locally trivial fibration 
      over $U\setminus\{0\}$,  
    \item the central fiber $\pi^{-1}(\{0\})$ is contractible,
    \item the space $Z$ retracts onto the central fiber $f_0(X)$.
  \end{enumerate}
  Such a mapping $F$ is called a \emph{good representative} of the stabilization, and each of the mappings $f_s\colon X\to Y$
  is called a \emph{stable perturbation} of $f$. 
\end{definition}

It is well known that for $(n,N)$ in the nice dimensions of Mather \cite{MatherVI}, with $n< N$,
all finite multi-germs admit a stabilization (observe that finite is equivalent to $\mathcal K$-finite whenever $n< N$). Away from the nice dimensions,
every multi-germ admits a topological stabilization.

\subsection{ Multiple point spaces }
Here we introduce the main objects needed for our result: the
multiple point spaces of a finite map. Our exposition is close to
\cite{NunoPenafort17} in spirit, but its contents
can be extracted from previous work like \cite{MararMond89}.

\begin{definition} A \emph{strict $k$-multiple point} of $f\colon X\to Y$ is a point
$(x^{(1)}, \dots, x^{(k)})\in X^k$ such that $f(x^{(i)})=f(x^{(j)})$
and $x^{(i)}\neq x^{(j)}$, for all $i\neq j$.

The $k$-th multiple point space of a stable map $f\colon
  X\to Y$ is the analytic closure 
  \[
    D^k(f)=\overline{\{\text{strict $k$-multiple points of $f$}\}}.
  \] 
  The space $D^k(f)$ is taken with the reduced analytic structure and
  the definition extends to stable multi-germs by taking representatives.

  The multiple point space of a finite multi-germ $f\colon (\CC^n,S)\to
  (\CC^N,0)$ is 
  \[
    D^k(f)=D^k(F)\cap (\CC^n\times\{0\})^k,
  \]
  for a stable
  unfolding $F\colon (\CC^n\times\CC^r,S\times\{0\})\to (\CC^N\times
  \CC^r,0)$ of $f$. The multiple point space $D^k(f)$ of a finite map
  $f\colon X\to Y$ is obtained by gluing the $k$-multiple point spaces
  of the multi-germs of $f$ at $f^{-1}(y)$, for all $y\in Y$. We write
  $D^k=D^k(f)$ whenever $f$ is understood.  
\end{definition} 

For every
$k$, the projection which drops the $i$-th coordinate takes strict
multiple points to strict multiple points. For any finite map $X\to
Y$, one can see that these projections induce maps 
\[
  \epsilon^{i,k} \colon D^k\to D^{k-1}.
\] 
For formal reasons, it is convenient to
extend this notation to the spaces $D^1=X$ and $D^0=Y$, to the maps
$\epsilon^{i,2}\colon D^2\to X$ dropping the $i$-th coordinate,
and to the map $\epsilon^{1,1}=f\colon X\to Y.$ We also write
\[
  \epsilon^k=\epsilon^{k,k}\circ\dots\circ\epsilon^{1,1}\colon D^k\to Y.
\]

The following well known result makes precise which diagonal
points are added when defining double points via passing to a stable
unfolding. A proof which is consistent with this approach can be found
in \cite{NunoPenafort17}.
\begin{lemma}\label{lemDoubleStrictAndSingular} 
  The double point space
  $D^2$ of any map $f\colon X\to Y$ consists of 
  \begin{enumerate} 
    \item
      Strict double points $(x,x')\in X^2$, with $x'\neq x$ and $f(x')=f(x)$,
    \item Diagonal singular points $(x,x)\in \Delta X$, such that $f$ is
      singular at $x$.  
  \end{enumerate}

\end{lemma}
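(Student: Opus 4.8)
\textbf{Proof plan for Lemma \ref{lemDoubleStrictAndSingular}.}
The plan is to reduce everything to the case where $f$ is a stable map germ, since by definition $D^2(f) = D^2(F)\cap (\CC^n\times\{0\})^2$ for a stable unfolding $F$, and the fiber product description of $D^2$ behaves well under this restriction. So first I would recall the standard alternative description of $D^2$ for a finite map germ $f\colon (\CC^n,S)\to(\CC^N,y)$: off the diagonal, $D^2(f)$ agrees with the strict double point set $\{(x,x')\colon x\neq x',\ f(x)=f(x')\}$ inside $(X\times X)\setminus\Delta$, and the content of the lemma is to identify the part of the analytic closure that lies on the diagonal $\Delta X$. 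The inclusion ``$\supseteq$'' in both directions then needs to be checked: that diagonal singular points of $f$ do lie in $D^2(f)$, and conversely that $D^2(f)\cap\Delta X$ contains only singular points of $f$.

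The key step is the local analysis at a diagonal point $(x,x)\in\Delta X$. For the inclusion of singular points into $D^2(f)$, I would use the stable unfolding $F$: near a point where $f$ is singular, $F$ has a double point locus whose closure meets the diagonal exactly along the singular locus of $f$ (this is where one invokes the structure of stable maps, for which $D^2(F)$ is well understood and smooth of the expected dimension, with $D^2(F)\cap\Delta$ the ramification locus of $F$, i.e.\ where $\D F$ drops rank among the restricted directions). Intersecting with $\CC^n\times\{0\}$ and using that the unfolding is trivial in the stable directions, the ramification locus of $F$ meets the diagonal of the zero-parameter slice precisely at the points where $\D f$ is singular. Conversely, for ``$\subseteq$'': if $(x,x)\in D^2(f)\cap\Delta X$, then $(x,x)$ is a limit of points $(x_j,x_j')$ with $x_j\neq x_j'$, $f(x_j)=f(x_j')$ (possibly already inside the unfolding, then restricted); taking the limit of the secant/chord directions $(x_j-x_j')/\|x_j-x_j'\|$ and applying the mean value / Taylor expansion of $f$ along the chord shows that this limit direction lies in $\ker \D f(x)$, so $\D f(x)$ cannot be injective and $f$ is singular at $x$.

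The main obstacle I expect is making the ``closure meets the diagonal'' argument rigorous without re-deriving the entire local theory of stable maps: one must be careful that $D^2(f)$ is \emph{by definition} computed via a stable unfolding, so the argument genuinely takes place in $D^2(F)$, and one needs the (standard, but nontrivial) fact that for a stable $F$ the space $D^2(F)$ is reduced, Cohen--Macaulay of the expected dimension, and that its intersection with the diagonal is the critical locus of $F$. Since the excerpt already cites \cite{MararMond89} and \cite{NunoPenafort17} for exactly this circle of ideas, I would lean on those references for the structural facts about $D^2(F)$ and keep the new content to the two elementary inclusions via the chord-direction limit argument. A secondary technical point is the passage from a multi-germ $f\colon(\CC^n,S)\to(\CC^N,0)$ with $|S|>1$ to the germ picture at a single source point: the off-diagonal strict double points coming from distinct branches through different points of $S$ are automatically in $D^2$ and never approach the diagonal, so they contribute only case (1), and the diagonal analysis is purely local at each point of $S$ separately, which is precisely the content of case (2).
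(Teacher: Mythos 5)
The paper does not actually prove this lemma: it is stated as well known, with the proof deferred to \cite{NunoPenafort17}. Your sketch is the standard argument and is sound. The two real points are exactly the ones you isolate at the diagonal: (i) the chord-direction limit, which correctly shows that a diagonal point of $\overline{\{\text{strict double points of }F\}}$ is a singular point of $F$ (and note that any kernel vector of $\D F(x,0)$ automatically has vanishing unfolding component, since $F(x,u)=(f_u(x),u)$, so this really does force $\D f(x)$ to drop rank, not just $\D F$); and (ii) the converse inclusion, that every singular point of a \emph{stable} map lies in the closure of its strict double points, which is the genuinely nontrivial input. For (ii) you appeal to the structural description of $D^2(F)$ for stable $F$ (reduced, of expected dimension, meeting the diagonal in the critical locus) from \cite{MararMond89}/\cite{NunoPenafort17}; since the paper itself imports the entire lemma from those references, this is a legitimate place to stop, though a self-contained proof would need the divided-difference (or Fitting-ideal) presentation of $D^2$ rather than smoothness alone. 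Your reduction from a general finite map to the stable unfolding, and the remark that multi-germ branches through distinct source points never approach the diagonal, are both correct.
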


The $k$-th multiple point space $D^k(F)$ of a finite map $F\colon
X\times U\to Y\times U$, of the form $F(x,u)=(f_u(x),u)$, can be
embedded in $X^k\times U$. This embedding makes $D^k(F)$ into a
(possibly non-flat) family of spaces $D^k(f_u)$, in the sense that
\[
  D^k(f_t):= D^k(F)\cap\{u=t\}
\]

In the case of corank
one germs, the multiple point spaces can be computed directly without
taking stable unfoldings. Every corank one map germ $(\CC^n,0)\to (\CC^p,0)$ can be taken, by a suitable change coordinates in the source and target, to a map of the form \[f(x,u)=(f_n(u,x),\dots,f_N(u,x),u),\quad u\in \CC^{n-1},x\in \CC.\] A set of generators for the ideal defining $D^k$ for such a map was given By Marar and Mond in \cite{MararMond89} (see Proposition 2.16). These generators are as follows:

\begin{lemma}
 In the setting above, the multiple point space $D^k(f)$
  is defined in $\CC^k\times\CC^{n-1}$ by the vanishing
  of the iterated divided differences 
  \[f_j[x_1,x_2,u], f_j[x_1,x_2,x_3,u],\dots,f_j[x_1,\dots,x_k,u],\text{ for }  j=n,\dots,N.\]
  The divided differences are defined as
  \begin{itemize} 
    \item[] $f_j[x_1,x_2,u]=\dfrac{f_j(x_2,u)-f_j(x_1,u)}{x_2-x_1}$, 
    \item[] $f_j[x_1,x_2,x_3,u]=\dfrac{f_j[x_1,x_3,u]-f_j[x_1,x_2,u]}{x_3-x_2}$
  \end{itemize} 
  and, iteratively, 
  \begin{itemize} 
    \item[]
      $f_j[x_1,\dots,x_k,u]=\dfrac{f_j[x_1,\dots,x_{k-2},x_k,u]-f_j[x_1,\dots,x_{k-2},x_{k-1},u]}{x_k-x_{k-1}},$
  \end{itemize} 
 \end{lemma}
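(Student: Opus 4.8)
The plan is to prove the two assertions contained in the statement in turn: first, that each iterated divided difference is a genuine holomorphic function on $\CC^k\times\CC^{n-1}$, so that $V\big(\{f_j[x_1,\dots,x_m,u]\}\big)$ is a well-defined closed analytic subspace; and second, that this common zero set equals $D^k(f)$. For holomorphy I would argue by induction on the number $m$ of nodes: the apparent pole of $f_j[x_1,\dots,x_m,u]$ along $\{x_m=x_{m-1}\}$ is removable because, by the inductive hypothesis, the numerator $f_j[x_1,\dots,x_{m-2},x_m,u]-f_j[x_1,\dots,x_{m-2},x_{m-1},u]$ is holomorphic and vanishes there. Equivalently, one has the Hermite--Genocchi formula
\[
  f_j[x_1,\dots,x_m,u]=\int_{\sigma_{m-1}}\big(\partial_x^{m-1}f_j\big)\big(\lambda_1 x_1+\dots+\lambda_m x_m,u\big)\,\D\lambda,
\]
the integral taken over the standard $(m-1)$-simplex $\sigma_{m-1}=\{\lambda_i\geq 0,\ \sum_i\lambda_i=1\}$, which is visibly holomorphic in $(x_1,\dots,x_m,u)$; in particular $f_j[x,\dots,x,u]=\tfrac{1}{(m-1)!}\,\partial_x^{m-1}f_j(x,u)$ on the small diagonal.

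Next I would treat the open set $W\subseteq\CC^k\times\CC^{n-1}$ on which $x_1,\dots,x_k$ are pairwise distinct. There the identification is pure linear algebra: for fixed $u$ and fixed $j$, Newton's interpolation formula exhibits the passage between the tuple $\big(f_j(x_1,u),\dots,f_j(x_k,u)\big)$ and the tuple $\big(f_j(x_1,u),f_j[x_1,x_2,u],\dots,f_j[x_1,\dots,x_k,u]\big)$ as a triangular, hence invertible over the ring of functions on $W$, change of coordinates. Hence all divided differences of order $\geq 2$ vanish at a point of $W$ if and only if $f_j(x_1,u)=\dots=f_j(x_k,u)$ for all $j=n,\dots,N$, i.e. if and only if the point is a strict $k$-multiple point of $f$. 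Since the divided differences are holomorphic, their zero set is closed, so it contains the analytic closure of the strict $k$-multiple points. For $k=2$ the reverse inclusion follows at once: $D^1=\CC^n$ is smooth, so Lemma \ref{lemDoubleStrictAndSingular} applies, and on the diagonal $\{x_1=x_2=x\}$ the equations $f_n[x,x,u]=\dots=f_N[x,x,u]=0$ read $\partial_x f_n(x,u)=\dots=\partial_x f_N(x,u)=0$, which (in the corank $1$ normal form) cut out precisely the locus where $f$ fails to be immersive, i.e. the diagonal contribution to $D^2(f)$.

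For $k>2$ the remaining point is to show that $V\big(\{f_j[x_1,\dots,x_m,u]\}\big)$ reproduces $D^k(f)$ along the big diagonal $\bigcup_{a\neq b}\{x_a=x_b\}$ — with the ``infinitely near'' multiple points of $D^k$, the higher analogues of the diagonal singular points of Lemma \ref{lemDoubleStrictAndSingular}, and nothing else. I would proceed by induction, using that $D^k(f)$ is the double point space $D^2(q)$ of the finite coordinate projection $q\colon D^{k-1}(f)\to D^{k-2}(f)$ forgetting the last point: applying the $k=2$ analysis to $q$ and the inductive hypothesis that $D^{k-1}$, $D^{k-2}$ are cut out by divided differences, one obtains a defining system for $D^k(f)$ inside $\CC^k\times\CC^{n-1}$ — the two overlapping families of divided differences on the node sets $\{x_1,\dots,x_{k-1}\}$ and $\{x_1,\dots,x_{k-2},x_k\}$ together with the single new order-$k$ difference — and it remains to simplify this, using the symmetry of the divided differences, to the asserted list. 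Since $D^{k-1}(f)$ is in general singular (over $\Inst(f)$), so that the $k=2$ analysis does not literally apply to $q$, it is cleanest to first reduce to the case that $f$ is stable: a corank $1$ germ admits a corank $1$ stable unfolding $F(x,u,w)=(f_w(x,u),w)$, the divided differences of $F$ in the variables $x_1,\dots,x_k$ with parameters $(u,w)$ restrict on $\{w=0\}$ exactly to those of $f$, and $D^k(f)=D^k(F)\cap\{w=0\}$ by definition; for stable corank $1$ germs all of the $D^j(F)$ are smooth \cite{MararMond89}, so the induction runs entirely inside the category of manifolds.

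The main obstacle is exactly this diagonal analysis, and it is where the corank $\leq 1$ hypothesis is essential: only in corank $1$ does a single ``vertical'' variable $x$ make each step of the construction a one-variable interpolation problem, so that the multiple point spaces can be written down directly from $f$ by the divided-difference formulae; in higher corank one has no such formula and must argue inside a stable unfolding throughout. Everything else is the linear algebra of Newton interpolation and the standard regularity of divided differences of holomorphic functions; the details of the surviving step are Proposition 2.16 of \cite{MararMond89}.
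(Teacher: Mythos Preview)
The paper does not give a proof of this lemma at all: it is quoted verbatim as Proposition~2.16 of Marar--Mond \cite{MararMond89}, with the sentence immediately preceding the statement reading ``A set of generators for the ideal defining $D^k$ for such a map was given by Marar and Mond in \cite{MararMond89} (see Proposition 2.16). These generators are as follows:''. So there is nothing to compare your argument against within the paper itself; your outline already goes much further than the paper does.

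That said, your inductive step for $k>2$ has a circularity risk you should be aware of. You want to use (i) the iteration principle $D^k\cong D^2(\epsilon^{k-1})$ and (ii) the smoothness of all $D^j(F)$ for a stable corank~$1$ unfolding $F$. In the paper these appear as parts of Theorem~\ref{thm:MararMondCurvilinearStabilityCriterion}, which is explicitly a compilation of results from \cite{MararMond89} and \cite{GoryunovMond93}; and in \cite{MararMond89} both of these facts are \emph{deduced from} the divided-differences description (Proposition~2.16), not proved independently of it. So invoking them to establish Proposition~2.16 is, as written, circular. Your closing sentence (``the details of the surviving step are Proposition~2.16 of \cite{MararMond89}'') effectively concedes this: the hard diagonal analysis is exactly what Marar--Mond do, and your induction does not bypass it.

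The parts of your outline that are genuinely independent --- holomorphy via Hermite--Genocchi, the Newton-interpolation identification on the locus of pairwise distinct nodes, the $k=2$ diagonal analysis via Lemma~\ref{lemDoubleStrictAndSingular}, and the reduction to a stable corank~$1$ unfolding --- are all correct and are the right scaffolding. If you want a self-contained argument, the clean route is the one Marar--Mond actually take: work in the stable unfolding, observe that the $(k-1)(N-n+1)$ divided differences cut out a subspace of the right codimension in $\CC^k\times\CC^{n-1}\times\CC^r$ (hence a complete intersection), and identify it with the closure of strict $k$-multiple points by analysing the normal form directly on each diagonal stratum, without appealing to the iteration principle.
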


Having explicit generators for monogerms, we can deduce the following:
\begin{lemma}
  Let $f\colon X\to Y$ be a finite map between holomorphic manifolds of 
  dimension $n$ and $N$ respectively. If $D^k$ is non-empty, then
  $\dim D^k\geq kn-(k-1)N$. If $f$ has corank one and $\dim D^k=kn-(k-1)N$,
  then $D^k$ is locally a complete intersection.
\end{lemma}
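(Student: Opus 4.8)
The plan is to reduce everything to the explicit generators for corank one germs given in the previous lemma, after first disposing of the general lower bound $\dim D^k \geq kn - (k-1)N$. For the lower bound, I would argue as follows. The statement is local, so we may work with a multi-germ, and it suffices to check it at the worst points of $D^k$. Pass to a stable unfolding $F\colon (\CC^n\times\CC^r, S\times\{0\})\to(\CC^N\times\CC^r,0)$, so that $D^k(f) = D^k(F)\cap(\CC^n\times\{0\})^k$. For a stable map, $D^k(F)$ is known (again by \cite{MararMond89}) to be smooth of dimension $k(n+r) - (k-1)(N+r) = kn - (k-1)N + r$, when nonempty; intersecting with the $kr$ equations $u_1 = \dots = u_k = 0$ (which on $D^k(F)\subset X^k\times U$ cut out at most $r$ independent conditions, since on the diagonal-type locus all $u_i$ coincide) drops the dimension by at most $r$, giving $\dim D^k(f) \geq kn - (k-1)N$. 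I would spell this out carefully, since the count of how many of the equations $u_i = 0$ are independent is the one place where a sloppy argument could go wrong.

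For the second assertion, assume $f$ has corank one and $\dim D^k = kn - (k-1)N$. Put $f$ in the normal form $f(x,u) = (f_n(u,x),\dots,f_N(u,x), u)$ with $x\in\CC$, $u\in\CC^{n-1}$, so that $D^k(f)$ sits inside $\CC^k\times\CC^{n-1}$, an ambient space of dimension $k + (n-1)$. By the preceding lemma, $D^k(f)$ is cut out there by the divided differences $f_j[x_1,\dots,x_i,u]$ for $2\leq i\leq k$ and $n\leq j \leq N$, which is a total of $(N-n+1)(k-1)$ equations. The expected codimension of $D^k$ in this ambient space is
\[
  (k + n - 1) - \bigl(kn - (k-1)N\bigr) = (k-1)(N - n) + (k-1) = (k-1)(N-n+1),
\]
which is exactly the number of defining equations. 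Hence $D^k(f)$ is cut out by the expected number of equations; having assumed its dimension equals the expected one, it is therefore a complete intersection, locally at every point. The only subtlety is that this is a statement about the \emph{scheme} defined by these equations versus the reduced space $D^k$; I would note that for these generators the ideal they generate already defines $D^k$ (this is the content of Marar–Mond's Proposition 2.16, which we are invoking), so no separate reducedness check is needed.

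The main obstacle, and the step I would be most careful about, is the dimension-count in the first part: justifying that passing from $D^k(F)$ to $D^k(f)$ by setting the unfolding parameters to zero costs at most $r$ in dimension rather than $kr$. The point is that on $D^k(F)$, viewed in $X^k\times U$, every point has all $k$ of its $U$-coordinates equal (the $k$ preimage points lie over a common point of $Y\times U$), so the $kr$ scalar equations $u_i^{(1)} = \dots = u_i^{(k)} = 0$ are equivalent, on $D^k(F)$, to just the $r$ equations $u^{(1)} = 0$. Once this is observed the bound is immediate from Krull's principal ideal theorem. Alternatively, for the corank one case one can bypass the unfolding entirely and read the lower bound straight off the generators via the codimension computation above, which is cleaner; I would present the corank one argument in detail and remark that the general lower bound follows from the stable-unfolding picture.
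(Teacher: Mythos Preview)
Your proposal is correct and is precisely the argument the paper gestures at with the phrase ``Having explicit generators for monogerms, we can deduce the following''---the paper supplies no further proof. Your equation count in $\CC^k\times\CC^{n-1}$ is right, and your treatment of the one genuine subtlety (that the $kr$ equations $u^{(1)}=\cdots=u^{(k)}=0$ collapse to $r$ independent ones on $D^k(F)\subset X^k\times U$ because all copies of the unfolding parameter agree there) is exactly what is needed to make the Krull bound go through; the only minor point is that the reference to \cite{MararMond89} for the dimension of $D^k(F)$ in the \emph{general} stable case should really point to Mather's multi-transversality, since Marar--Mond treat corank one.
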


\begin{definition}\label{defDimensionallyCorrect}
  We say that the multiple point space $D^k$ of map 
  $f\colon X\to Y$ as above has the expected
  dimension if it is empty or has dimension $kn-(k-1)N$. A map is called
  \emph{dimensionally correct} if all its multiple point spaces have the
  expected dimension.
\end{definition}

Observe that, to check if a finite map is dimensionally correct, it
is enough to check dimensions from $D^2$ to the smallest $k$ such
that $D^k=\emptyset$. This is because $D^k=\emptyset$ forces the higher
multiplicity spaces to be empty, and hence to have the expected dimension.

We gather some results from \cite{GoryunovMond93} and \cite{MararMond89} 
describing the relation between stability and multiple point spaces 
in the corank one setting.

\begin{theorem}\label{thm:MararMondCurvilinearStabilityCriterion} 
  A corank one map $f$ is stable if and only if every $D^k$ is smooth of
  dimension $kn-(k-1)N$ or empty.  In this case, the following properties
  are satisfied: 
  \begin{enumerate} 
    \item Every $D^k$ is the closure of the
      set  of strict $k$-multiple points.  
    \item The maps $\epsilon^{i,k}\colon
      D^{k}\to D^{k-1}$ are corank one stable maps. 
    \item The multiple point
      spaces satisfy the iteration principle 
      \[
	D^k(\epsilon_r)\cong D^{r+k-1},
      \]
      via isomorphisms which commute with the maps $D^r(\epsilon_k)\to
      D^{r-1}(\epsilon_k)$ and $D^{r+k}\to D^{r+k-1}$.  
  \end{enumerate}
\end{theorem}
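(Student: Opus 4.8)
The plan is to establish the stability criterion and properties~(1)--(3) together, by an induction whose engine is the iteration principle~(3). Throughout I use the corank one normal form $f(x,u)=(f_n(u,x),\dots,f_N(u,x),u)$ with $x\in\CC$ and $u\in\CC^{n-1}$, so that by the previous lemma $D^k(f)\subset\CC^k\times\CC^{n-1}$ is cut out by exactly $(k-1)(N-n+1)$ divided-difference equations --- the number expected of a complete intersection of dimension $kn-(k-1)N$.

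\emph{The iteration principle.} I would first establish~(3) at the level of defining ideals, where it holds for any corank one map and is a purely formal statement. A point of $D^k(\epsilon_r)$ consists of a $k$-tuple of points of $D^r(f)\subset X^r$ with a common image under $\epsilon_r$; such points agree in $r-1$ of their $X$-coordinates and differ in the remaining one, so the (generically $r+k-1$) distinct coordinates occurring assemble canonically into a point of $D^{r+k-1}(f)$. On defining ideals this is a manipulation with iterated divided differences: their symmetry, together with the recursion defining $f_j[x_1,\dots,x_k,u]$, shows that the ideal of $D^k(\epsilon_r)$, rewritten in the coordinates of $X^{r+k-1}$, coincides with that of $D^{r+k-1}(f)$. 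Bookkeeping of which coordinate is dropped at each stage gives the asserted compatibility with $D^r(\epsilon_k)\to D^{r-1}(\epsilon_k)$ and $D^{r+k}\to D^{r+k-1}$. Under the stability hypothesis $D^r(f)$ is smooth and this ideal-level statement becomes the isomorphism in~(3).

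\emph{The analytic reduction to $D^2$.} The core local statement is: a corank one germ $f$ with $D^2(f)=\emptyset$ is an immersion, hence stable; and a corank one germ with $D^2(f)\neq\emptyset$ is stable if and only if $D^2(f)$ is smooth of dimension $2n-N$ and the map $\epsilon^{2,2}\colon D^2(f)\to D^1(f)=X$ --- which is then automatically corank one, being the restriction of a coordinate projection to a smooth variety --- is stable. I would derive this from Mather's infinitesimal criterion (stability $\iff$ $T^1_f=0$): the divided-difference presentation identifies the $\mathcal{A}_e$-normal space $T^1_f$ with a module manufactured from $\OO_{D^2(f)}$ and $\epsilon^{2,2}$, so that its vanishing is equivalent to $D^2(f)$ being a smooth complete intersection of the expected dimension together with $T^1_{\epsilon^{2,2}}=0$. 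For multi-germs one first invokes Mather's description of stable multi-germs as multi-transverse families of stable mono-germs, decomposes $D^2(f)$ over ordered pairs of branches --- the off-diagonal parts being fibre products of mono-germ double-point data, the diagonal parts governed by the individual branches --- and checks that multi-transversality is precisely what makes those fibre products smooth. Getting the $T^1$-identification and this multi-germ bookkeeping right is the step I expect to be the main obstacle; it is in essence the work carried out in \cite{MararMond89}.

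\emph{Induction and the remaining properties.} Given the two steps above, I induct on $m(f):=\min\{k : D^k(f)=\emptyset\}$, which is finite because $f$ is finite of corank one. If $m(f)=2$ the germ is an immersion and all assertions hold trivially. If $m(f)\geq 3$ and $D^2(f)$ is smooth of the expected dimension, the iteration principle gives $D^k(\epsilon^{2,2})\cong D^{k+1}(f)$, so $m(\epsilon^{2,2})=m(f)-1$ and the induction hypothesis applies to $\epsilon^{2,2}$: it is stable if and only if every $D^k(f)$, $k\geq 3$, is smooth of dimension $kn-(k-1)N$ or empty. Combining with the analytic reduction, $f$ is stable if and only if every $D^k(f)$, $k\geq 2$, is smooth of the expected dimension or empty; this is the criterion. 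Assume now $f$ is stable. For~(2): each $\epsilon^{i,k}\colon D^k\to D^{k-1}$ is the restriction to the smooth variety $D^k$ of a projection dropping one $\CC$-coordinate, hence has corank $\leq 1$; applying the criterion to $\epsilon^{i,k}$ and using the iteration principle, its multiple point spaces are among the smooth, expected-dimensional $D^j(f)$, so $\epsilon^{i,k}$ is stable. For~(1): by Lemma~\ref{lemDoubleStrictAndSingular} the non-strict part of $D^2$ is the diagonal locus $\{(x,x):x\in\Sigma(f)\}$, which for a stable corank one map is the fold locus $\Sigma^1$ of dimension $n-(N-n+1)=2n-N-1$, strictly less than $\dim D^2$; as $D^2$ is smooth, hence pure-dimensional, its strict double points are dense, and passing to $\epsilon^{2,2}$ via the iteration principle, strict $k$-multiple points of $f$ correspond to strict $(k-1)$-multiple points of $\epsilon^{2,2}$, which are dense in $D^{k-1}(\epsilon^{2,2})\cong D^k(f)$ by induction. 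Hence each $D^k$ is the closure of its set of strict $k$-multiple points.
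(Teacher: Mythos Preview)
The paper does not prove this theorem: it is quoted from \cite{MararMond89} and \cite{GoryunovMond93} (see the sentence immediately preceding the statement), so there is no in-paper proof to compare your proposal against.

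That said, your outline is essentially the Marar--Mond approach: derive the iteration principle from the divided-difference presentation, reduce stability of $f$ to smoothness of $D^2(f)$ together with stability of $\epsilon^{2,2}$ via an identification of $T^1_f$, and then induct. You correctly flag the $T^1$ identification (and its multi-germ version via multi-transversality) as the substantive analytic input and defer it to \cite{MararMond89}; this is honest, but it means your proposal is a roadmap rather than a self-contained proof. One small point in your argument for~(1): under the iteration isomorphism $D^{k-1}(\epsilon^{2,2})\cong D^k(f)$, a strict $(k-1)$-multiple point of $\epsilon^{2,2}$ corresponds to a tuple $(x_1,y_1,\dots,y_{k-1})$ with the $y_i$ pairwise distinct and each $y_i\neq x_1$, which is slightly weaker than being a strict $k$-multiple point of $f$ (you have not yet excluded coincidences among the $y_i$ and $x_1$ jointly --- though in fact these are already excluded). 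The argument still goes through because every non-strict locus sits in a subvariety of strictly smaller dimension inside the smooth, pure-dimensional $D^k(f)$, but the bookkeeping deserves a sentence more care.
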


\begin{lemma} 
  For every finite map $f$, the maps $\epsilon^{i,k}\colon
  D^k\to D^{k-1}$ and $\epsilon^k\colon D^k\to Y$ are finite.  
\end{lemma}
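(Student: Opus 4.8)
The plan is to reduce the statement to the finiteness of the single-step projections $\epsilon^{i,k}\colon D^k\to D^{k-1}$. Since $\epsilon^k=\epsilon^{k,k}\circ\cdots\circ\epsilon^{1,1}$ with $\epsilon^{1,1}=f$ finite by hypothesis, and finiteness of complex-analytic maps (properness together with finite fibres) is preserved under composition, it will suffice to treat $\epsilon^{i,k}$ for $k\geq 2$.

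The key observation I would use is that $D^k$ sits inside the $k$-fold fibre product $W_k:=X\times_Y\cdots\times_Y X$. On strict $k$-multiple points the equalities $f(x^{(1)})=\cdots=f(x^{(k)})$ hold by definition, hence on their analytic closure as well; passing to a stable unfolding $F$ adds only diagonal-type points lying over one target value (this is Lemma \ref{lemDoubleStrictAndSingular} for $k=2$, and the analogous local description of $D^k(F)$ for stable $F$ in general), and intersecting with $\{u=0\}$ preserves the equalities, so $D^k(f)\subseteq W_k$; since the condition is local, the same holds after gluing the multi-germ contributions of a non-germ $f$. One then observes that the coordinate projection $X^k\to X^{k-1}$ forgetting the $i$-th factor restricts to the fibre-product projection $W_k\to W_{k-1}$, which is the base change of the finite map $f\colon X\to Y$ along $W_{k-1}\to Y$ and hence finite; restricting it to the closed analytic subspace $D^k$, whose image lies in the closed analytic subspace $D^{k-1}$, yields finiteness of $\epsilon^{i,k}$.

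If one wishes to avoid the fibre-product formalism, the same two ingredients can be verified by hand. A point of $(\epsilon^{i,k})^{-1}\big((x_1,\dots,\widehat{x_i},\dots,x_k)\big)$ is obtained by inserting an $x_i$ with $f(x_i)=f(x_j)$ for some $j\neq i$, and $f^{-1}(f(x_j))$ is finite — so the fibres are finite. For properness, given a compact $K\subset D^{k-1}$ one checks that its preimage is a closed subset (as $D^k$ is closed in $X^k$) of a product of compact sets — the $j$-th slot being the compact projection of $K$ for $j\neq i$, and the $i$-th slot being $f^{-1}\big(f(\mathrm{pr}_{j_0}(K))\big)$ for a fixed $j_0\neq i$, compact because $f$ is proper — hence compact; and $\epsilon^k$ is then finite as a composition of finite maps.

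I expect the only step needing genuine care to be the inclusion $D^k\subseteq W_k$, since the definition of $D^k$ for non-stable maps proceeds indirectly through a stable unfolding and a gluing procedure; everything afterwards is a formal consequence of properness and quasi-finiteness being stable under composition, base change, and restriction to closed analytic subspaces (the last point using that complex-analytic maps are separated).
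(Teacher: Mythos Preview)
Your argument is correct and takes a genuinely different route from the paper's. Both proofs begin by reducing to the single-step projections $\epsilon^{i,k}$, noting that $\epsilon^k$ is a composition of such maps. From there the approaches diverge.

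The paper localises to multi-germs, passes to a stable unfolding $F$, and observes that $\epsilon^{i,k}(f)$ is the restriction of $\epsilon^{i,k}(F)$. For stable $F$ it then invokes the structural theory of multiple point spaces: the $D^k(F)$ are smooth of the expected dimension and the $\epsilon^{i,k}(F)$ are themselves stable maps between manifolds with source dimension $kn-(k-1)N$ strictly less than target dimension $(k-1)n-(k-2)N$; stable germs in these dimensions are $\mathcal K$-finite, hence finite. Your approach bypasses stability theory entirely: you embed $D^k$ in the $k$-fold fibre product $W_k=X\times_Y\cdots\times_Y X$ and observe that the forgetful projection $W_k\to W_{k-1}$ is a base change of the finite map $f$, hence finite, and that restriction to a closed analytic subspace preserves finiteness.

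Your route is more elementary and more robust---it needs nothing about smoothness of $D^k(F)$ or stability of the projections, only that $D^k\subseteq W_k$ (immediate from the definition for stable $F$, since $W_k(F)$ is closed and contains the strict multiple points; preserved under slicing by $\{u=0\}$ and gluing). The paper's route, on the other hand, ties the lemma into the surrounding structural results on stable maps and yields the extra information that the $\epsilon^{i,k}$ are stable in the stable case, which is used elsewhere. Your caution about the step $D^k\subseteq W_k$ is well placed but, as you note, the verification is straightforward once one unwinds the definition via the stable unfolding.
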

\begin{proof}
  It suffices to show the claim for $\epsilon^{i,k}$, because
  every $\epsilon^k$ is a composition of those. Recall that the spaces $D^k$
  are obtained by glueing the corresponding spaces for representatives of
  multi-germs at $S=f^{-1}(\{y\})$, and thus it suffices to show the claim for
  multi-germs. Observe that the restriction of a finite map is finite and the
  maps $\epsilon^{i,k}$ are restrictions of those of any unfolding. Hence,
  since every finite multi-germ admits a stable unfolding, we may assume
  that $f$ is a stable finite multi-germ. In this case, the maps $D^{k}\to
  D^{k-1}$ are stable maps from a manifold of dimension $kn-(k-1)N$ to
  another of dimension $(k-1)n-(k-2)N$, hence finite.  
\end{proof}

\begin{corollary}
  \label{cor:CurvilinearMapsHaveCompleteIntersectionsAsMultPtSp}
  Let $X$ and $Y$ be complex manifolds of dimension $n$ and $N$ respectively.
  For any finite and dimensionally correct corank one map $f\colon X\to
  Y$, with instability locus of dimension $d$, the spaces $D^k$ are
  locally complete intersections of dimension $kn-(k-1)N$, if not empty,
  with singular locus of dimension at most $d$.  
\end{corollary}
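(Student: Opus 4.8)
The plan is to treat the two assertions of the corollary separately. For the claim that each nonempty $D^k$ is a local complete intersection of dimension $kn-(k-1)N$ there is almost nothing to prove: the hypothesis that $f$ is dimensionally correct means precisely that every nonempty $D^k$ has dimension $kn-(k-1)N$ (Definition \ref{defDimensionallyCorrect}), and the earlier lemma on dimensions of multiple point spaces of corank one maps then says that such a $D^k$ is locally a complete intersection. To apply that lemma pointwise one invokes the compatibility built into the definition of $D^k$: analytic-locally around a point $p\in D^k$ with $\epsilon^k(p)=y$, the space $D^k(f)$ coincides with the $k$-th multiple point space of the multi-germ of $f$ at $f^{-1}(y)$, which again has corank $\leq 1$.

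For the bound on $\Sing(D^k)$, I would localize over the target. Pick $y\in Y\setminus\Inst(f)$. By Definition \ref{defStability} the multi-germ $f\colon(X,f^{-1}(y))\to(Y,y)$ is then stable, and it still has corank $\leq 1$; hence, by Theorem \ref{thm:MararMondCurvilinearStabilityCriterion}, all of its multiple point spaces are smooth of the expected dimension or empty. Since the germ of $D^k(f)$ at any point of the finite set $(\epsilon^k)^{-1}(y)$ agrees with the corresponding multiple point space of this stable multi-germ, the space $D^k(f)$ is smooth along $(\epsilon^k)^{-1}(y)$. As $y$ ranges over all of $Y\setminus\Inst(f)$ this yields the inclusion
\[
  \Sing\big(D^k\big)\ \subseteq\ (\epsilon^k)^{-1}\big(\Inst(f)\big).
\]

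It remains to bound the dimension of the right-hand side. By the earlier lemma asserting that the maps $\epsilon^{i,k}$ and $\epsilon^k$ are finite, the map $\epsilon^k\colon D^k\to Y$ is finite; a finite morphism is closed with finite fibres, so it carries each irreducible component of $(\epsilon^k)^{-1}(\Inst(f))$ onto a closed analytic subset of $\Inst(f)$ of the same dimension. Therefore $\dim(\epsilon^k)^{-1}(\Inst(f))\leq\dim\Inst(f)=d$, and combining this with the inclusion above gives $\dim\Sing(D^k)\leq d$, as claimed.

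I do not anticipate a real difficulty here; the only point deserving care is the identification, used twice above, of the multiple point space of the finite map $f$ with those of its multi-germs, which is exactly what transfers both the local complete intersection property and the smoothness outside $\Inst(f)$ from the multi-germ setting — where the divided-difference generators and Theorem \ref{thm:MararMondCurvilinearStabilityCriterion} are available — to the global space $D^k(f)$.
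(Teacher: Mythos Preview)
Your argument is correct and is precisely the one the paper intends: the corollary is stated without proof, as an immediate consequence of the dimension lemma for corank one multiple point spaces, Theorem \ref{thm:MararMondCurvilinearStabilityCriterion}, and the finiteness of the maps $\epsilon^k$. You have simply made explicit the two-step deduction (local complete intersection from the divided-difference count, and $\Sing(D^k)\subseteq(\epsilon^k)^{-1}(\Inst(f))$ via stability of the multi-germs at points $y\notin\Inst(f)$, combined with finiteness of $\epsilon^k$) that the authors leave to the reader.
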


\begin{corollary}
  \label{cor:EstimateOnSingularLocusForMultPtSpc}
  For any finite dimensionally correct corank one map $f\colon X\to
  Y$ as above, the union of the projections to $X$ of the singular loci of the $D^k$ is equal
  to the instability locus of $f$. In particular, if the instability locus
  has dimension $d$, then there is at
  least one space $D^k$ having a $d$-dimensional singular locus.  
\end{corollary}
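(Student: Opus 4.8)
The plan is to localize the whole statement at the fibres of $f$ and feed it into the Marar--Mond stability criterion (Theorem~\ref{thm:MararMondCurvilinearStabilityCriterion}). Both the multiple point spaces $D^k$ and the instability locus are obtained by gluing the corresponding data over the multi-germs of $f$ at the fibres $f^{-1}(\{y\})$, so it suffices to prove, for each $y\in Y$, the equivalence
\[
  y\in\Inst(f)\quad\Longleftrightarrow\quad \text{$D^k(f)$ is singular at some point over $y$, for some $k$,}
\]
where ``over $y$'' means inside $\bigl(f^{-1}(\{y\})\bigr)^k\subset X^k$. Writing $p_k\colon D^k\to X$ for the projection onto the first coordinate (so that $p_k=\epsilon^{2,2}\circ\cdots\circ\epsilon^{k,k}$ and $\epsilon^k=f\circ p_k$), this equivalence gives at once $\bigcup_k\epsilon^k(\Sing D^k)=\Inst(f)$; equivalently, the union of the projections $p_k(\Sing D^k)\subset X$ is carried onto $\Inst(f)$ by $f$. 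The passage between source and target here is harmless because $f$ is finite.

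For the equivalence, the first point is that dimensional correctness descends to multi-germs: $D^k$ of the multi-germ of $f$ at $f^{-1}(\{y\})$ is by definition the germ of the global space $D^k(f)$ along its points over $y$, and by Corollary~\ref{cor:CurvilinearMapsHaveCompleteIntersectionsAsMultPtSp} the space $D^k(f)$ is, whenever non-empty, a local complete intersection of pure dimension $kn-(k-1)N$; hence every germ of it again has that dimension, so the multi-germ at $f^{-1}(\{y\})$ is again dimensionally correct. Now Theorem~\ref{thm:MararMondCurvilinearStabilityCriterion}, applied to this multi-germ, says it is stable precisely when each of its $D^k$ is smooth of the expected dimension or empty. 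Since the dimension is automatically correct, the only possible obstruction to stability at $y$ is the presence of a singular point of some $D^k$ over $y$ — which is exactly the displayed equivalence, read in both directions.

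It remains to extract the dimension statement. Only finitely many $D^k$ are non-empty, since $kn-(k-1)N\to-\infty$ as $k\to\infty$. Each $p_k$ is finite, being a composition of the finite maps $\epsilon^{i,i}$, so it preserves dimensions of analytic subsets; thus $\dim p_k(\Sing D^k)=\dim\Sing D^k\le d$ by Corollary~\ref{cor:CurvilinearMapsHaveCompleteIntersectionsAsMultPtSp}. On the other hand $f$ maps $\bigcup_k p_k(\Sing D^k)$ finitely onto $\Inst(f)$, which has dimension $d$, so this union has dimension exactly $d$; being a finite union of analytic sets, its dimension is the maximum of the dimensions of its members, and therefore $\dim\Sing D^k=d$ for at least one $k$.

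The one point genuinely requiring care — and on which the argument rests throughout — is the identification of the multiple point spaces of the multi-germ of $f$ at $f^{-1}(\{y\})$ with the germ of the global space $D^k(f)$ along the fibre over $y$, so that a singularity of $D^k(f)$ over $y$ is precisely what the stability criterion detects at that multi-germ. This is built into the definition of the multiple point spaces in the corank one setting and into the results quoted from \cite{MararMond89}; once it is granted, the remainder is routine bookkeeping with finite maps and with dimensions of analytic sets.
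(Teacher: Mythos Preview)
Your proof is correct and is exactly the intended derivation: the paper states this result without proof as an immediate corollary of Theorem~\ref{thm:MararMondCurvilinearStabilityCriterion} and Corollary~\ref{cor:CurvilinearMapsHaveCompleteIntersectionsAsMultPtSp}, and your localization to multi-germs together with the finiteness bookkeeping for the $\epsilon^{i,k}$ fills in precisely the expected details. You also correctly handle the slight imprecision in the statement (projections to $X$ versus $\Inst(f)\subset Y$) by passing through the finite map $f$.
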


\section{Milnor fibrations and disentanglements}
\label{sec:MilnorFibrations}

In this section we shall discuss two setups in parallel which may be seen as two 
branches of a common theoretical trunk: Smoothings of non-isolated singularities 
and disentanglements for finite map germs. Both setups fit into the  
commutative diagram (\ref{eqn:DeformationDiagram}) below: In the first case we 
consider $(Z_0,0) \subset (\CC^N,0)$ as a non-isolated singularity and 
$(Z_0,0) \hookrightarrow (Z,0) \overset{h}{\longrightarrow} (\CC,0)$ a smoothing of it 
over $(\CC,0)$ with total space $(Z,0)$. In the other case, $(Z_0,0)$ appears as the 
image of a finite holomorphic map germ $f \colon (\CC^n,0) \to (\CC^N,0)$. Passing 
to an unfolding $F \colon (\CC^n,0) \times (\CC,0) \to (\CC^N,0) \times (\CC,0)$, we can 
easily verify that also $F$ is finite and hence has an analytically closed image 
$(Z,0) \subset (\CC^{N+1},0)$ equipped with a projection $h$ to the base of the 
unfolding. Forgetting about $F$ we obtain a deformation of $(Z_0,0)$ over $(\CC,0)$ 
in the classical sense.

\begin{equation}
  \xymatrix{
    &
    (\CC^N,0) \ar@{^{(}->}[rr] \ar[dd] & 
    &
    (\CC^{N+1},0) \ar[dd]^u \\
    (Z_0,0) \ar@{^{(}->}[rr] \ar@{^{(}->}[ur] \ar[dr] & 
    & 
    (Z,0) \ar@{^{(}->}[ur] \ar[dr]^h \\
    & 
    \{0\} \ar@{^{(}->}[rr] & 
    &
    (\CC,0)
  }
  \label{eqn:DeformationDiagram}
\end{equation}
We will usually write $y$ for the standard coordinates of $(\CC^N,0)$ and 
$(y,u)$ for those of $(\CC^{N+1},0)$ with $u$ the deformation parameter. 
The function $h$ is the restriction of $u$ to the total space 
$(Z,0)$ of the deformation. 

\subsection{The fibration theorems}
\label{sec:FibrationTheorems}

Next, we will make precise what we understand to be the Milnor fiber in this 
situation closely following \cite{Le77}. 
By abuse of notation we will denote by $Z_0$, $Z$, and $h$ 
a set of representatives of the corresponding germs defined on some product
$Y \times \Delta \subset \CC^{N+1} = \CC^N \times \CC$ with $Y$ an open neighborhood 
of $0$ in $\CC^N$ and $\Delta$ an open disc around the origin in the deformation base 
$\CC$. 

One starts out by endowing $(Z,0)$ with a Whitney stratification 
$\Sigma = \{ S_\alpha\}_{\alpha \in A}$, in which 
$(Z_0,0)$ appears as union of strata. 
For the existence of complex analytic Whitney stratifications 
see e.g. \cite{Whitney65}. 
If (\ref{eqn:DeformationDiagram}) is a 
smoothing of $(Z_0,0)$, 
then $Z \setminus Z_0$ is itself necessarily smooth in a neighborhood
of the origin and in this case we have a unique open stratum which we will call 
$S_0 := Z\setminus Z_0$. Whenever (\ref{eqn:DeformationDiagram}) arises from an unfolding 
$F : (\CC^n \times \CC, 0) \to (\CC^N\times \CC,0)$ of a finite holomorphic 
map germ $f = f_0 : (\CC^n,0) \to (\CC^N,0)$, the open stratum $S_0$ might 
not coincide with $Z \setminus Z_0$, since the deformation 
of the image $(Z_0,0)$ induced 
from the unfolding $F$ does not have to be a smoothing of $(Z_0,0)$.
In either case we may utilize the Curve Selection Lemma to verify 
that the restriction of $h$ to any 
stratum $S_\alpha$ outside $Z_0$
does not have a critical point in a neighborhood of $0\in Z$. 
After shrinking our representatives, we may therefore assume that 
$h|_{S_\alpha}$ is a submersion at any point $p\in S_\alpha$, $p \notin Z_0$ and write 
\[
  T_{p} h^{-1}(\{h(p)\}) = \ker \D h|_{S_\alpha} \subset T_{p} S_\alpha
\]
for the tangent space of its fiber $h^{-1}(\{h(p)\})$ through $p$.
Note that by construction the restriction of $h$ to any stratum $S_\alpha \subset Z_0$ 
in the central fiber is the constant map. Therefore at any point
$p \in S_\alpha \subset Z_0$ 
the spaces $T_p S_\alpha$ and 
$T_p h^{-1}(\{h(p)\})$ coincide. 

As a second step one refines the stratification in order to also fulfill 
Thom's $a_h$-condition. This is the crucial ingredient in order to establish 
the fibration theorems, see \cite{Thom69}.
The existence of such refinements 
in the complex analytic setting has been shown in 
\cite{Hironaka77}. 

\begin{definition}
  Let $h \colon (Z,p) \to (\CC,q)$ be a holomorphic map germ 
  and $\Sigma = \{S_\alpha\}_{\alpha \in A}$ a Whitney stratification of $(Z,p)$
  as above.
  Consider a sequence of points $(p_i)_{i\in \NN}$ in a stratum $S_\alpha$ 
  converging to $p \in S_\beta$ such that the sequence of tangent spaces 
  \[
    T_{p_i} S_\alpha \overset{n\to \infty }{\longrightarrow} 
    T_{\infty} \supset T_{p} S_{\beta} 
  \]
  also converges. From this one also 
  obtains the sequence of \textit{relative} tangent spaces 
  \[
    K_i := T_{p_i}h^{-1}(\{h(p_i)\}) \subset T_{p_i} S_\alpha.
  \]
  Passing to a subsequence, we may assume that also the $K_i$ converge 
  to a limit $K_{\infty} \subset T_\infty$.
  We say that the stratification $\Sigma$ satisfies the $a_h$-condition 
  at $p$, if for \textit{any} such sequence one has
  \begin{equation}
    K_\infty \supset T_{p} h^{-1}\left( \{h(p)\} \right)
    \label{eqn:a_h-condition}
  \end{equation}
  where $T_{p}h^{-1}(\{h(p)\}) = \ker \D h|_{S_\beta}(p) \subset T_p S_\beta$.
\end{definition}

In a third step we restrict to \textit{proper} representatives by choosing a 
\textit{Milnor ball}.  Suppose 
\[
  F = (f_u,u): X \times \Delta \to \CC^N \times \Delta
\]
is a representative of the unfolding $F$ as above defined on some open neighborhood 
$X \subset \CC^n$ over a small disc $\Delta \subset \CC$. Let $F(X)$ be its image 
in $\CC^N$. We may choose the open neighborhood $0 \in Y \subset \CC^N$ such that 
$Z := F(X) \cap Y \times \Delta$ is analytically closed in $Y\times \Delta$. 
A Milnor ball is now chosen 
as follows: Consider the squared distance function from the $u$-axis in $\CC^{N+1}$
\[
  \rho : (y,u) \mapsto |y|^2.
\]
This is a real analytic function. We can apply the Curve Selection Lemma to deduce 
that $\rho$ has no critical values on some small interval $(0,\varepsilon_0]$ with 
respect to the stratification $\Sigma$ in a neighborhood of the origin.
We may assume this neighborhood to be $Y\times \Delta$ again.
Consequently,
\[
  \overline Z := Z \cap \rho^{-1}( [0,\varepsilon_0] ) \quad \textnormal{ and } \quad
  \overline Z_0 := Z_0 \cap \rho^{-1}( [0,\varepsilon_0 ])
\]
are again Whitney stratified spaces. Let 
\[
  \partial \overline Z := Z \cap \rho^{-1}(\{\varepsilon_0\}) 
  \quad \textnormal{ and } \quad
  \partial \overline Z_0 := Z_0 \cap \rho^{-1}(\{\varepsilon_0\}) 
\]
be their boundaries. They inherit a Whitney stratification 
$\partial \Sigma$ from $\Sigma$ by intersecting each stratum 
$S_\alpha$ with $\rho^{-1}(\{\varepsilon_0\})$.
Moreover, the restriction 
\[
  h|_{\overline Z} : \overline Z \to \Delta
\]
is proper and, as a consequence of Thom's first Isotopy Lemma applied to $\rho$ on 
$Z_0$, one has:

\begin{corollary}[Conical structure]
  \label{cor:ConicalStructure}
  The inclusion $\{0\} \hookrightarrow \overline Z_0$ is a deformation retract.
\end{corollary}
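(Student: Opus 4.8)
The plan is to establish the classical conical structure of the compact Whitney stratified set $\overline Z_0$ at the origin, following the proof scheme of Thom's first isotopy lemma in Mather's formulation \cite{TopStab76}. First I would observe that, after choosing $\varepsilon_0$ small enough that the closed ball $\{|y|^2\le\varepsilon_0\}$ is contained in $Y$, the space $\overline Z_0 = Z_0\cap\rho^{-1}([0,\varepsilon_0])$ is compact, and that $\rho^{-1}(0)\cap\overline Z_0 = \{0\}$, since $\rho$ is the squared distance to $0\in\CC^N$ while $Z_0$ lies in $\{u=0\}$. By the choice of $\varepsilon_0$, the function $\rho$ has no stratified critical points on $\overline Z_0\setminus\{0\}$, so
\[
  \rho|\colon \overline Z_0\setminus\{0\}\longrightarrow (0,\varepsilon_0]
\]
is a proper stratified submersion.

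The second step is to lift the vector field $-\partial/\partial t$ on $(0,\varepsilon_0]$ to a controlled stratified vector field $\eta$ on $\overline Z_0\setminus\{0\}$: a family of vector fields on the strata $S_\alpha\cap\rho^{-1}((0,\varepsilon_0])$, tangent to the strata, compatible with a fixed system of control data, and satisfying $\D\rho(\eta)\equiv -1$ on each stratum. Such a lift is produced by the usual partition-of-unity patching of local lifts over the strata, the Whitney conditions guaranteeing that the patched field is controlled. By Mather's theory the flow $\Phi_s$ of $\eta$ is defined on its natural domain and is continuous jointly in $(p,s)$, and by construction it moves points at unit speed with respect to $\rho$, i.e. $\rho(\Phi_s(p)) = \rho(p)-s$.

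Finally I would assemble the deformation retraction by reparametrizing the flow radially: set $H(p,t) = \Phi_{t\rho(p)}(p)$ for $p\neq 0$ and $t<1$, and $H(p,1) = 0$, $H(0,t) = 0$. For $p\neq 0$ and $t\in[0,1)$ the flow line stays in $\overline Z_0\setminus\{0\}$ because $\rho(\Phi_s(p))>0$ for $s<\rho(p)$, and $H(p,0) = p$. As $t\to 1$ the curve $H(p,\cdot)$ runs along a flow line with $\rho\to 0$, and properness of $\rho|$ forces it to converge in $\overline Z_0$ to the unique point over $\rho = 0$, namely $0$; hence $H$ extends continuously by $H(p,1) = 0$. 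This yields a homotopy from $\id_{\overline Z_0}$ to the constant map at $0$ fixing the origin throughout, which is precisely the assertion that $\{0\}\hookrightarrow\overline Z_0$ is a deformation retract. The point I expect to be the main obstacle is the joint continuity of $H$ along the locus $\{t=1\}$ and at the points $(0,t)$: this is exactly where the control data for $\eta$ are needed, ensuring that the flow, and hence its radial reparametrization, extends continuously over the cone point and not merely fiberwise over $(0,\varepsilon_0]$.
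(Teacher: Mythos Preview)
Your argument is correct and is precisely the standard unpacking of the paper's one-line justification: the paper states the corollary as an immediate consequence of Thom's first Isotopy Lemma applied to $\rho$ on $Z_0$, and what you have written is exactly how that application goes (lift $-\partial/\partial t$ to a controlled stratified vector field, integrate, reparametrize radially). You have also correctly identified the only delicate point, namely the joint continuity of $H$ at $\{t=1\}$ and at $(0,t)$, which is handled by compactness of $\overline Z_0$ together with $\rho^{-1}(0)\cap\overline Z_0=\{0\}$.
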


\noindent

In this setup, L\^e provided the following 
fibration theorem, cf. \cite{Le77}, Theorem 1.1 and Remark 1.3 (b): 

\begin{theorem}
  There exists $\varepsilon_0 \gg \eta >0$ such that 
  the restriction 
  \[
    h : \overline Z \cap h^{-1}(\Delta_\eta \setminus \{0\} ) \to \Delta_{\eta} \setminus \{0\}
  \]
  is a topological fibration, where $\Delta_\eta$ is the disc around the origin 
  of radius $\eta$.
  \label{thm:FibrationTheorem}
\end{theorem}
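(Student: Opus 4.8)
The plan is to deduce the statement from the controlled-vector-field argument underlying Thom's first isotopy lemma, adapted to the circumstance that $\overline Z$ carries a ``boundary'', namely the Milnor sphere $\partial\overline Z=Z\cap\rho^{-1}(\{\varepsilon_0\})$. Observe first that over $\Delta\setminus\{0\}$ no point of $\overline Z$ lies in $Z_0=h^{-1}(0)$, so by the construction above the restriction of $h$ to any stratum $S_\alpha$ meeting $h^{-1}(\Delta\setminus\{0\})$ is a submersion; hence $h$ is already a proper stratified submersion on the open part $\{\rho<\varepsilon_0\}$, and the only thing that can obstruct a fibration is that an integral curve of a lifted vector field escapes through $\partial\overline Z$. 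To prevent this we will shrink $\eta$ so that the fibres of $h$ over $\Delta_\eta\setminus\{0\}$ meet $\partial\overline Z$ transversally inside every stratum.

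The first step is the following transversality claim: there is $\eta_1>0$ such that for every stratum $S_\alpha\not\subset Z_0$ and every $p\in S_\alpha\cap\rho^{-1}(\{\varepsilon_0\})$ with $0<|h(p)|<\eta_1$, the map $(h,\rho)|_{S_\alpha}\colon S_\alpha\to\CC\times\RR$ is a submersion at $p$; equivalently, $\rho$ restricted to the fibre $(h|_{S_\alpha})^{-1}(h(p))$ (which is smooth near $p$ since $h|_{S_\alpha}$ is a submersion there, $p\notin Z_0$) has no critical point at $p$. If this failed there would be a stratum $S_\alpha\not\subset Z_0$ and points $p_i\in S_\alpha$ with $\rho(p_i)=\varepsilon_0$, $h(p_i)\neq 0$ and $h(p_i)\to 0$, at which $\D\rho$ vanishes on the relative tangent space $K_i=T_{p_i}h^{-1}(\{h(p_i)\})\subset T_{p_i}S_\alpha$. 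Passing to subsequences we may assume $p_i\to p$, $T_{p_i}S_\alpha\to T_\infty$ and $K_i\to K_\infty$ in the relevant Grassmannians. Since $h^{-1}(0)=Z_0$ and $\rho(p)=\varepsilon_0$, the point $p$ lies on $\partial\overline Z_0$, hence in a stratum $S_\beta\subset Z_0$. Approximating any $v\in K_\infty$ by vectors $v_i\in K_i$ and passing to the limit in $\D\rho(p_i)(v_i)=0$ shows $\D\rho(p)$ vanishes on $K_\infty$; but the $a_h$-condition gives $K_\infty\supset T_p h^{-1}(\{h(p)\})$, which equals $T_p S_\beta$ because $h$ is constant on the strata contained in $Z_0$, whereas the choice of $\varepsilon_0$ as a Milnor radius for $\rho$ forces $\D\rho(p)|_{T_p S_\beta}\neq 0$ --- a contradiction. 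Since $\partial\overline Z_0=\rho^{-1}(\{\varepsilon_0\})\cap h^{-1}(0)$ is compact, such an $\eta_1$ works uniformly.

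Now fix $\varepsilon_0\gg\eta\le\eta_1$ and build, over $\overline Z\cap h^{-1}(\Delta_\eta\setminus\{0\})$, a continuous controlled stratified vector field $v$ lifting a prescribed nowhere-vanishing vector field $w$ on $\Delta_\eta\setminus\{0\}$ and tangent to $\partial\overline Z$, i.e. with $\D\rho(v)\equiv 0$: on strata in $\{\rho<\varepsilon_0\}$ lift $w$ through the submersion $h|_{S_\alpha}$, while on strata in $\rho^{-1}(\{\varepsilon_0\})$ lift $(w,0)$ through the submersion $(h,\rho)|_{S_\alpha}$ provided by the first step, and glue these local lifts by a partition of unity, inductively on the dimension of the strata and compatibly with the control data, exactly as in the proof of Thom's isotopy lemmas (see \cite{Thom69} and \cite{TopStab76}). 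Because $h|_{\overline Z}$ is proper, the preimage of every compact subset of $\Delta_\eta\setminus\{0\}$ is compact, so the flow of $v$ is complete; transporting fibres along paths in $\Delta_\eta\setminus\{0\}$ by this flow produces stratum-preserving homeomorphisms, establishing that $h$ is a topological fibration over $\Delta_\eta\setminus\{0\}$.

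The main obstacle is exactly the treatment of the boundary sphere: away from $\partial\overline Z$ and $Z_0$ one has a routine proper stratified submersion, and the whole point of building the $a_h$-condition into $\Sigma$ and of choosing $\varepsilon_0$ to be a Milnor radius for $\rho$ is to guarantee, after shrinking $\eta$, that the fibres of $h$ are transverse to $\partial\overline Z$; this transversality is precisely what allows the lift $v$ to be taken tangent to $\partial\overline Z$, so that its flow does not leave $\overline Z$. Everything else is the standard controlled-vector-field machinery. One could also simply invoke \cite{Le77}, Theorem~1.1 and Remark~1.3(b), whose hypotheses are met by the data $(\overline Z,h,\rho,\Sigma)$ constructed above.
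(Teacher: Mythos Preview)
Your argument is correct: the transversality step using the $a_h$-condition together with the choice of $\varepsilon_0$ as a regular value of $\rho$ on $Z_0$, followed by the standard controlled-vector-field construction tangent to $\partial\overline Z$, is exactly the mechanism behind L\^e's fibration theorem. In fact the paper does not give its own proof of this statement at all; it simply attributes the result to \cite{Le77}, Theorem~1.1 and Remark~1.3(b), precisely as you note in your final sentence. So your proposal is not an alternative route but rather an unpacking of the cited proof, and is consistent with the paper's approach.
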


\begin{definition}
  In a given deformation of a space $(Z_0,0)$ as in 
  (\ref{eqn:DeformationDiagram}) and Theorem \ref{thm:FibrationTheorem}, we call 
  \[ 
    h^{-1}(\{t\}) \cap \overline Z, \quad t \in \Delta_\eta \setminus \{0\}
  \]
  the Milnor fiber of $h$ on $(Z,0)$. If (\ref{eqn:DeformationDiagram}) arises 
  from a stabilization $F$ of a holomorphic map germ $f$, then we will speak of 
  the disentanglement of $f(X)$ in the given unfolding $F$.
\end{definition}

  The reader should verify that with the particular choices for our representatives 
  made in this section, the requirements on a good representative in 
  Definition \ref{def:StabilizationAndStablePerturbation} are indeed satisfied. 

\begin{remark}
  The notions of this section are mostly of topological nature. A priori, the 
  space germs $(Z,0)$, and $(Z_0,0)$ are germs of complex spaces, i.e. they 
  might have a non-reduced scheme structure. However, when considering their 
  Whitney stratifications, we only consider their reduced models and the 
  reduced structure of the strata and consequently everything that follows 
  is to be understood purely topologically. 
\end{remark}

\subsection{Induced deformations of multiple point spaces}

\label{sec:InducedDeformationsOfMultiplePointSpaces}

In the setting we will encounter when dealing with the multiple point spaces,
we do not have the freedom to choose the shape of $\overline Z$ inside $Z$ since 
multiple point spaces and their deformations appear as preimages of the canonical 
projections to the unfolding of the original map germ, which had been chosen 
beforehand: Suppose as usual that we are given a representative 
\[
  F = (f_u,u) : X \times \Delta \to Y \times \Delta \subset \CC^N \times \CC
\]
of an unfolding of a finite holomorphic map germ $f = f_0$. Let 
$Z \subset Y \times \Delta$ be its image and $D^k$ the 
multiple point spaces: 
\medskip
\begin{equation}
  \xymatrix{ 
    Z \ar[d]_h & 
    D^1 \ar[l]|-F \ar[dl]|-u &
    D^2 \ar[l]^{\varepsilon^{\bullet,2}} \ar@/_1pc/[ll]|-{\varepsilon^2} \ar@/^/[dll]|-u&
    D^3 \ar[l]^{\varepsilon^{\bullet,3}} \ar@/_2pc/[lll]|-{\varepsilon^3} \ar@/^/[dlll]|-u&
    \cdots \ar[l] \ar@/_3pc/[llll] \ar@/^/[dllll] \\
    \Delta & & & & \\
  }
  \label{eqn:MultiplePointSpacesUnfolding}
\end{equation}
Consider the squared distance $\rho$ from the $u$-axis in $\CC^{N+1}$ as before. 
On all the spaces $D^k$ 
this already fixes the function with respect to which our proper representatives 
are about to be chosen: 
\begin{equation}
   \rho_k := \rho \circ \varepsilon^k : D^k \to \RR
  \label{eqn:DistanceFunctionMultiplePointSpaces}
\end{equation}
From the finiteness and analyticity 
of all the maps $\varepsilon^k$ we easily deduce the following three 
properties of all the $ \rho_k$:

\begin{enumerate}
  \item $ \rho_k$ has an isolated zero on the central fibers 
    $D^k_0 = D^k \cap \{u=0\}$, resp. $Z\times \{0\}$, at the origin.
  \item There exists $\varepsilon_0>0$ such that for each 
    $0< \varepsilon \leq \varepsilon_0$ and all $k>0$ the restriction 
    \[
      u : \overline D^k = D^{k} \cap  \rho_k^{-1}( [0,\varepsilon_0] ) \to \Delta
    \]
    is proper.
  \item Furthermore, $\varepsilon_0 >0$ can be chosen small enough such that each 
    $ \rho_k$ is a stratified submersion on
    $D_0^k \cap \rho_k^{-1}( (0,\varepsilon_0])$.
\end{enumerate}
\noindent
We will from now on assume that $\varepsilon_0$ has been chosen small enough 
to fulfill all these requirements simultaneously with those formulated in 
the previous section.

\medskip 

Recall Theorem \ref{thm:MararMondCurvilinearStabilityCriterion}, 
Corollary \ref{cor:CurvilinearMapsHaveCompleteIntersectionsAsMultPtSp}, and 
Corollary \ref{cor:EstimateOnSingularLocusForMultPtSpc}: If 
(\ref{eqn:MultiplePointSpacesUnfolding}) is a stabilization of a dimensionally correct 
map germ $f_0$ of corank one with instability locus of dimension $d = \dim \Inst(f_0)$, then 
the induced deformations 
\[
  \overline D^k \overset{u}{\longrightarrow} \Delta
\]
are smoothings of the complete intersection singularities
\[
  (D^k_0,0) = (D^k \cap \{u=0\},0),
\]
which have non-isolated singular locus of dimension 
at most $d$. The only difference to the classical case is the unconventional 
choice of the Milnor balls via the functions $\rho_k$. But as we shall see below, 
this does 
not alter the theory and we can apply the usual machinery to get to know their topological 
invariants. The induced deformation of one of the multiple point schemes is what the 
reader should have in mind when progressing to the next section.

\subsection{Non-isolated complete intersection singularities}
\label{sec:Non-IsoCompleteInter}

Suppose we are given a smoothing of a nonisolated singularity $(Z_0,0) \subset (\CC^N,0)$ 
as in (\ref{eqn:DeformationDiagram}). 
There are essentially two different ways to construct transversal, isolated singularities 
from nonisolated ones. In the first case one considers the singular part of the 
\textit{link} $\partial \overline Z_0 = \rho^{-1}(\{\varepsilon_0\}) \cap Z_0$ as 
e.g. in \cite{Siersma83} and also the original work by Kato and Matsumoto 
\cite{KatoMatsumoto75}. In the other case, which is for example 
exhibited in \cite{Le77}, one takes 
hyperplane slices at the central point $0 \in Z_0$, which are transverse to the 
singular locus. A detailed description of the sense of transversality underlying this 
choice is outlined in \cite[Section 2]{HammLe73}.
Unfortunately, the article only deals with 
hypersurfaces, but the methods applied there carry over naturally:
We are considering functions $h \colon (Z,0) \to (\CC,0)$ on an arbitrary 
reduced, complex analytic space $(Z,0) \subset (\CC^{N+1},0)$, 
rather than functions on smooth spaces. All the choices of stratifications 
and their refinements in \cite{HammLe73} can easily be adapted 
to the situation in Diagram (\ref{eqn:DeformationDiagram}) and one eventually 
obtains the following theorem by L\^e, \cite{Le77}, Theorem 2.1:

\begin{theorem}
  There is an open Zariski dense set $\Omega$ in the space of affine 
  hyperplanes of $\CC^N$ at $0$ such that if $l=0$ belongs to $\Omega$,
  there exists an analytic curve $C_0 \subset W \subset \CC^2$ in 
  an open neighborhood $W$ of $0 \in \CC^2$, such that for any $\varepsilon >0$ 
  small enough and $\eta>0$, $\varepsilon \gg \eta$, the mapping 
  \[
    (h,l) : Z \cap B_\varepsilon \cap (h,l)^{-1}( B_\eta \setminus C_0 ) 
    \to B_\eta \setminus C_0
  \]
  is a topological fibration.
  \label{thm:LeHyperplaneFibration}
\end{theorem}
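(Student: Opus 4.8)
The plan is to transplant the arguments of Hamm and L\^e \cite{HammLe73}, written there for hypersurface germs, to the present situation of a holomorphic function $h$ on an arbitrary reduced complex analytic germ $(Z,0)\subset(\CC^{N+1},0)$, as indicated in \cite{Le77} and in the paragraph preceding the statement. One starts from the Whitney stratification $\Sigma=\{S_\alpha\}$ of $(Z,0)$ of Section \ref{sec:FibrationTheorems}, in which $Z_0=h^{-1}(0)$ is a union of strata and which has been refined so as to satisfy Thom's $a_h$-condition. The genericity input needed is then the following: there is a Zariski-open dense set $\Omega$ of affine hyperplanes $\{l=0\}$ of $\CC^N$ such that, for $l=0\in\Omega$, the hyperplane meets every stratum $S_\alpha$ transversally on a punctured neighbourhood of $0$ in $Z$, and $\Sigma$ can be refined further --- without spoiling the $a_h$-condition --- so that it also satisfies the $a_{(h,l)}$-condition near $0$. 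For a fixed stratification this rests on the generic existence of Thom stratifications for the holomorphic map $(h,l)$ (Hironaka \cite{Hironaka77}) together with the L\^e--Teissier theory of relative polar varieties, applied stratum by stratum with $Z$, rather than a smooth space, as ambient.

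Next one would produce the curve $C_0$. On each stratum $S_\alpha\not\subset Z_0$ --- on which $h|_{S_\alpha}$ is a submersion, as recalled in the text --- let $\Sigma_\alpha(l)\subset S_\alpha$ be the locus where $(h,l)|_{S_\alpha}$ fails to be a submersion, and set $\Gamma(l):=\overline{\bigcup_{S_\alpha\not\subset Z_0}\Sigma_\alpha(l)}$, the relative polar set of $(h,l)$. Shrinking $\Omega$ if necessary, one arranges that $\Gamma(l)$ is a curve, or empty; this is the heart of the matter, and is exactly where the dimension estimates for polar varieties are invoked. Its image $(h,l)(\Gamma(l))$ is then at most one-dimensional, and one takes $C_0$ to be the union of $(h,l)(\Gamma(l))$ with the axis $\{h=0\}$ --- the latter because the slices $Z_0\cap\{l=s\}$ of the (possibly highly singular) central fibre need not be smooth for $s\neq 0$, so that $\{h=0\}$ already lies in the stratified critical-value set of $(h,l)$. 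Thus $C_0$ is an analytic curve in a neighbourhood $W$ of $0\in\CC^2$.

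It then remains to realise the fibration. Using the curve selection lemma applied to a real-analytic squared distance $\rho$ to the origin, one first fixes $\varepsilon_0>0$ small enough that the spheres $\{\rho=\varepsilon\}$ meet all strata of $Z$ transversally for $0<\varepsilon\leq\varepsilon_0$. A second, by now standard, curve-selection/incidence argument of the type in \cite[Section 2]{HammLe73} uses the $a_h$- and $a_{(h,l)}$-conditions together with the Whitney conditions to produce $\eta$ with $\varepsilon_0\gg\eta>0$ such that, for every $(t,s)\in B_\eta\setminus C_0$, the fibre $(h,l)^{-1}(t,s)\cap Z$ is stratified-transverse to $\{\rho=\varepsilon\}$. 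Equipping $Z\cap\overline{B}_\varepsilon$ with the stratification induced by $\Sigma$ and the boundary strata $S_\alpha\cap\{\rho=\varepsilon\}$, the map
\[
  (h,l)\colon Z\cap\overline{B}_\varepsilon\cap(h,l)^{-1}(B_\eta\setminus C_0)\longrightarrow B_\eta\setminus C_0
\]
is then a stratified submersion on every stratum --- interior strata because $C_0$ contains all the interior critical values, boundary strata because of the transversality just arranged --- and it is proper because $h$ is proper on the chosen representative $\overline{Z}$ and adjoining the coordinate $l$ preserves properness over compacta. Thom's first isotopy lemma now shows that this map is a locally trivial topological fibration over $B_\eta\setminus C_0$; since $B_\eta$ is contractible and $C_0$ has real codimension two, $B_\eta\setminus C_0$ is connected, and the statement (with $B_\varepsilon$ read as the corresponding closed ball, which changes nothing up to homotopy) follows.

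The hard part is the genericity statement behind $\Omega$: that for $l$ in a Zariski-open dense set the relative polar set $\Gamma(l)$ has dimension at most one, and that the stratification of the pair can be taken to satisfy the $a_{(h,l)}$-condition. In the hypersurface setting this is the technical core of \cite{HammLe73}; the work here is to check that the polar-variety dimension bounds and the genericity of Thom's condition survive replacing the smooth ambient space by the singular space $Z$, which one carries out by arguing on each stratum separately. The boundary-transversality step of the previous paragraph is the second, and essentially routine, technical ingredient.
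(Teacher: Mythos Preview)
The paper does not prove this theorem; it is quoted from L\^e \cite{Le77}, Theorem~2.1, with only the remark that the arguments of \cite{HammLe73} carry over when the smooth ambient space is replaced by the reduced analytic germ $(Z,0)$, together with the identification of $C_0$ as the image under $(h,l)$ of the polar curve $\Gamma$ of \eqref{eqn:PolarCurveDefinition}. Your sketch is precisely an expansion of that indicated argument --- generic $l$ so that the relative polar locus is at most a curve, boundary transversality arranged via the $a_h$-condition and the curve selection lemma, and Thom's first isotopy lemma to conclude --- and is consistent with the paper's surrounding discussion; the one cosmetic difference is that you adjoin the axis $\{h=0\}$ to $C_0$, whereas the paper describes $C_0$ simply as $(h,l)(\Gamma)$.
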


\noindent
As usual, $B_\varepsilon$ and $B_\eta$ denote the balls of radius 
$\varepsilon$ and $\eta$ in $\CC^N$ and $\CC^2$ respectively.
It is evident from the proof that 
we can replace $B_\varepsilon$ by $ \rho_k^{-1}( [0,\varepsilon] )$ for a suitable 
real analytic function $ \rho_k$, which fulfills the requirements 
formulated in Section \ref{sec:InducedDeformationsOfMultiplePointSpaces}. 
The verification of this fact along the lines of \cite{Le77} is left to the reader. 
See also \cite{HammLe73}.

The curve $C_0$ is the image of the 
so-called \textit{Polar curve} of the linear form $l$ relative to $h$ on $Z$: 
\begin{equation}
  \Gamma = \overline{ \{ y \in Z\setminus Z_0 : \ker \D h(y) \subset \{l=0\} \} }.
  \label{eqn:PolarCurveDefinition}
\end{equation}
For a fixed value $t \in \CC$, $|t|$ small enough, the restriction of $l$ to the 
interior of $\overline Z_t = \rho^{-1}([0,\varepsilon]) \cap h^{-1}(\{t\})$ has 
only isolated critical points -- the intersection points of $\Gamma$ with 
$h^{-1}(\{t\})$. One can furthermore show (see \cite{HammLe73})
that for a generic choice of $l$ 
these critical points are Morse critical points. But we shall not need that 
fact. 

What is more important for our purposes, is the fact that the set $\Omega$ of 
admissible hyperplanes 
$L = \{l=0\}$ is Zariski open. 
Let $\Omega'$ be the set of hyperplanes, which are transversal to the stratification 
$\Sigma$ of the total space $Z$ at $0$. By this we mean that the restriction 
of the linear form $l$ to any stratum of positive dimension is a submersion and 
that at an arbitrary point $p \in Z$ it does not annihilate any 
limiting tangent space of adjacent strata. 
Since the intersection of two Zariski open sets is again Zariski open and in particular 
non-empty, we may choose $L\in \Omega \cap \Omega'$.
With this choice it is easy to see that, 
since $Z_0$ is a union of strata, the \textit{transversal singularity} given by 
\begin{equation}
  Z_0^{\pitchfork} := Z_0 \cap L
  \label{eqn:TransversalSingularityDefinition}
\end{equation}
is nonsingular at every nonsingular point of $Z_0$. Moreover, 
$l$ is a non-constant function on every irreducible component of any 
stratum $S_\beta \subset \Sing(Z_0)$ 
of positive dimension and hence $\dim \Sing(Z_0^\pitchfork) = \dim \Sing(Z_0) -1$,
whenever $Z_0$ has non-isolated singularities.
This yields an induction argument on $d = \dim \Sing(Z_0)$: 

\begin{theorem}
  \label{thm:InductionStepConnectivity}
  Suppose $h : (Z,0) \to (\CC,0)$ as in (\ref{eqn:DeformationDiagram}) is a smoothing 
  of an $n$-dimensional,
  non-isolated singularity $(Z_0,0) \subset (\CC^N,0)$ 
  with singular locus $\Sing(Z_0)$ of dimension $d$ at $0$ and $L= \{l=0\}$ is 
  a hyperplane meeting the requirements of Theorem \ref{thm:FibrationTheorem}
  with transversal singularity 
  $(Z_0^\pitchfork,0)$. 
  Let $\rho$ be a non-negative real analytic function on a neighborhood of the origin 
  in $\CC^N$, which defines a good proper representative 
  $h : \overline Z \to \Delta$ of the deformation of 
  $\overline Z_0 = h^{-1}(\{0\})\cap \rho^{-1}([0,\varepsilon])$ and 
  the associated transversal singularity $\overline Z_0^{\pitchfork}$. Then 
  up to homotopy we obtain the space $\overline Z_t$ from $\overline Z_t^\pitchfork$ 
  by attaching handles of real dimension $n$. 
  \end{theorem}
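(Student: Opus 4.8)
The plan is to mimic the classical argument relating a non-isolated singularity to its transversal slice via the L\^e hyperplane fibration from Theorem \ref{thm:LeHyperplaneFibration}, applied with the function $\rho$ in place of a round ball. First I would set up the data: fix $L=\{l=0\}\in\Omega\cap\Omega'$ so that $Z_0^\pitchfork$ has singular locus of dimension $d-1$, and consider the map $(h,l)\colon \overline Z\to W\subset\CC^2$ together with the polar curve $\Gamma$ of \eqref{eqn:PolarCurveDefinition} and its image $C_0$. The key geometric input is that for fixed small $t$ the restriction $l|_{\operatorname{int}\overline Z_t}$ has only isolated critical points, namely the points of $\Gamma\cap h^{-1}(\{t\})$, and that these lie at bounded distance $|l|\neq 0$ from the central slice.

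The main body of the argument is a handle-attachment / Morse-theoretic comparison. I would fix a small loop or, better, a path in the $l$-plane: choose $t\neq 0$ small and look at the family of slices $\overline Z_t\cap\{l=\lambda\}$ as $\lambda$ moves along a path in the $l$-disc from $0$ to a generic value $\lambda_0$ avoiding the (finitely many) critical values of $l|_{\overline Z_t}$. For $\lambda=0$ this slice is $\overline Z_t^\pitchfork$ (the Milnor fibre of the transversal singularity, by our choice of $L$ and the properness provided by $\rho$); for $\lambda=\lambda_0$ the inclusion $\overline Z_t\cap\{l=\lambda_0\}\hookrightarrow\overline Z_t$ is a homotopy equivalence, since on the region $|l|\geq |\lambda_0|$ the map $l$ restricted to $\overline Z_t$ is a proper submersion onto an annulus (by Theorem \ref{thm:LeHyperplaneFibration} there are no critical points of $l$ there once $|\lambda_0|$ is chosen beyond all of them), so $\overline Z_t$ retracts onto that slice. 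Passing from $\lambda=0$ to $\lambda=\lambda_0$ one crosses the critical values of the stratified Morse function $\Re(\text{phase}\cdot l)$ on $\overline Z_t$, and at each such value one attaches a cell whose dimension is governed by the stratified Morse index; since the relevant critical points are smooth points of $\overline Z_t$ (which has pure complex dimension $n$) and the function is a holomorphic Morse function there, each attachment is a handle of real dimension $n$. Summing over the finitely many critical points gives the statement.

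The one delicate point to get right is the role of $\rho$ versus a standard Milnor ball. I would argue, exactly as indicated after Theorem \ref{thm:LeHyperplaneFibration}, that $\rho$ was chosen (Section \ref{sec:InducedDeformationsOfMultiplePointSpaces}) to be a stratified submersion on $\rho^{-1}((0,\varepsilon_0])\cap Z_0$ and, after shrinking, on the nearby fibres $Z_t$ as well; hence the boundary $\partial\overline Z_t=\rho^{-1}(\{\varepsilon\})\cap\overline Z_t$ carries no critical points of $l$ and the Morse theory on the manifold-with-boundary $\overline Z_t$ is well behaved — no handles escape through the boundary, and the retraction onto the outermost slice respects $\partial\overline Z_t$. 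This is the step I expect to be the main obstacle: verifying that with the polar curve $\Gamma$ and the level hypersurface $\{\rho=\varepsilon\}$ in general position relative to the stratification, the function $l$ has no critical points (in the stratified sense) on $\partial\overline Z_t$, so that Theorem \ref{thm:LeHyperplaneFibration} applies verbatim with $B_\varepsilon$ replaced by $\rho^{-1}([0,\varepsilon])$ and the handle count is not contaminated by boundary phenomena. Everything else is a routine transcription of the Hamm--L\^e argument for hyperplane sections of non-isolated singularities, cf. \cite{HammLe73,Le77}.

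Finally I would note that the transversal singularity $(Z_0^\pitchfork,0)$ is again of the type covered by Diagram \eqref{eqn:DeformationDiagram} — it sits in $L\cong\CC^{N-1}$, its total space $Z\cap(L\times\CC)$ carries the restricted function $h$, and the restriction of $\rho$ to it satisfies the same properties — with singular locus of dimension $d-1$; this is what makes Theorem \ref{thm:InductionStepConnectivity} the inductive step of a descending induction on $d$, the base case $d=0$ being the non-isolated-complete-intersection version of Kato--Matsumoto alluded to in the introduction.
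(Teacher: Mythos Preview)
Your overall plan --- use $l$ (or rather $\lambda=|l|^2$) as a Morse function on the manifold-with-boundary $\overline Z_t$, identify the interior critical points with the intersection of the polar curve $\Gamma$ with $h^{-1}(\{t\})$, and read off an $n$-cell attachment at each --- is indeed the paper's approach. But two points in your execution are not right, and the second is the heart of the matter.

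First, the Morse picture is set up backwards. You follow the \emph{slices} $\overline Z_t\cap\{l=\lambda\}$ and assert that for $|\lambda_0|$ beyond all critical values the inclusion $\overline Z_t\cap\{l=\lambda_0\}\hookrightarrow\overline Z_t$ is a homotopy equivalence. That cannot hold: this slice has complex dimension $n-1$ and so cannot carry the degree-$n$ homology of $\overline Z_t$. The correct picture (and the paper's) is the opposite one: a small tube $\lambda^{-1}([0,\eta])\cong\overline Z_t^\pitchfork\times\Delta_\eta$ retracts onto $\overline Z_t^\pitchfork$, and one then \emph{grows} the sublevel sets $\overline Z_t^{\le c}=\lambda^{-1}([0,c])$ until they exhaust $\overline Z_t$, attaching an $n$-cell at each interior critical value.

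Second, and this is the genuine gap, your treatment of the boundary is not correct. You claim that, by general position of $\{\rho=\varepsilon\}$ and the stratification, $l$ has no critical points on $\partial\overline Z_t$, so ``no handles escape through the boundary''. This is not true and not what the paper proves. The restriction of $\lambda$ to $\partial\overline Z_t$ does in general have critical points; what one must show is that each such relative boundary critical point is \emph{outward} in the sense of Definition~4.7, so that by Lemma~\ref{lem:RelativeMorseTheory} it contributes nothing to the homotopy type of $\overline Z_t^{\le c}$. This is exactly the content of Lemma~\ref{lem:OnlyOutwardCriticalPoints}, whose proof is the substantive step: one first shows there are no relative boundary critical points near $\partial\overline Z_0^\pitchfork$ (using the $\CC$-linear independence of $\D l$ and $\D^{1,0}\rho$ on limiting tangent spaces, via the $a_h$-condition and the choice $L\in\Omega\cap\Omega'$), and then a Curve-Selection argument on the semianalytic set $C^-$ of inward critical points on $\partial\overline Z_0$ rules out $a<0$ in \eqref{eqn:LinearDependenceEquation}; Lemma~\ref{lem:technicalSublemma} then propagates this to nearby fibres. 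None of this follows from ``$\rho$ is a stratified submersion'' or from generic transversality alone; the paper itself singles this boundary analysis out as ``the hardest part of the proof''. Your proposal identifies the obstacle correctly but resolves it with an assertion that is both unproved and, as stated, false.
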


The idea behind the proof of this theorem is to use the squared distance 
function $\lambda := |l|^2$ from the hyperplane $L$ as a real Morse function 
on $\overline Z_t$. The latter is a manifold with boundary and we therefore have to 
pass to stratified Morse theory. 
Perturbing the equation $l$ slightly, we may assume that 
the complex function $l$ has only Morse critical points on the interior 
of $\overline Z_t \setminus \overline Z_0^\pitchfork$ and it is easily seen that outside 
$L \cap \overline Z_t$, the function $\lambda$ has real Morse singularities 
of index $n = \dim_{\CC} \overline Z_t$ precisely at the critical points 
of $l$. 
The difficulty in the proof does therefore not arise from the critical points of 
$\lambda$ in the interior of $\overline Z_t\setminus \overline Z_0^\pitchfork$, but from critical points 
of $\lambda$ on the boundary $\partial\overline Z_t$. More precisely: 
The hardest part of the proof is to show that these critical points do not contribute 
to changes in the homotopy type when passing through their associated critical 
values.

In order to understand why this is the case, recall the following 
notions of Morse theory on manifolds with boundary.
Let $f \colon M' \to \RR$ be a smooth function on a manifold $M'$ and let 
$M \subset \overline M \subset M'$
be an open submanifold with boundary $\partial M$. For a point
$p\in \partial M$ choose a smooth function
$t \colon (M',p) \to (\RR,0)$ defining the boundary, i.e.
\[
  (\partial \overline M,p) = (t^{-1}(\{0\}),p) \quad \textnormal{and} \quad
  (\overline M,p) = (t^{-1}( (-\infty,0] ),p) .
\]
\begin{definition}
  We say that $p \in \partial M$ is a \textit{boundary critical point}
  for $f$ on $\overline M$, 
  if $p$ is a regular point of $f$ on $M'$ and a critical point of the 
  restriction $f|_{\partial M}$, i.e. if in the tangent space $T_p M'$ the 
  equation
  \begin{equation}
    \D f(p) = a \cdot \D t(p)
    \label{eqn:LinearDependenceBoundaryCriticalPoint}
  \end{equation}
  holds for some $a \neq 0$.
  The point $p$ is an \textit{outward} (resp. \textit{inward})
  critical point of $f$ on $\overline M$ 
  if the coefficient $a$ in (\ref{eqn:LinearDependenceBoundaryCriticalPoint}) is 
  positive (resp. negative). 
  Moreover, a boundary critical point $p$ is a \textit{Morse point} if 
  the restriction $f|_{\partial \overline M}$ has a Morse critical point 
  at $p$. 
\end{definition}

The following lemma can easily be deduced from stratified Morse theory, 
see \cite{GoreskyMacPherson88}.
\begin{lemma}
  Let $f \colon M' \to \RR$ be a smooth function on a manifold $M'$ 
  and $M \subset \overline M \subset M'$ a submanifold with boundary 
  $\partial \overline M$ such that the restriction of $f$ to $\overline M$ 
  is proper.
  Suppose $p \in \partial \overline M$ is 
  a boundary Morse critical point with critical value 
  $b \in \RR$ and no further critical points in the fiber 
  $\overline M \cap f^{-1}(\{b\})$.

  If $p$ is an outward critical point of $f$, then there exists 
  an $\varepsilon>0$ such that the space  
  $f^{-1}( (-\infty, b-\varepsilon] )$ is a strong deformation retract of 
  $f^{-1}( (-\infty,b+\varepsilon] )$. 
  
  If $p$ is an inward critical point of $f$, then up to homotopy we obtain 
  $f^{-1}( (-\infty, b+\varepsilon] )$ 
  from 
  $f^{-1}( (-\infty,b-\varepsilon] )$ by attaching a cell of dimension $i$, 
  where $i$ is the Morse index of the restriction $f|_{\partial \overline M}$.

  \label{lem:RelativeMorseTheory}
\end{lemma}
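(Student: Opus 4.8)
The statement is essentially a local normal form computation combined with the standard passing-a-critical-value lemmas of (stratified) Morse theory, so the plan is to reduce the assertion to a model situation near the boundary critical point $p$ and then quote the relevant results of \cite{GoreskyMacPherson88}. First I would set up local coordinates. Choose a chart around $p$ in $M'$ in which the boundary defining function is simply one of the coordinates, say $t = x_0$, so that $\overline M = \{x_0 \le 0\}$ and $\partial\overline M = \{x_0 = 0\}$ locally. Because $p$ is a regular point of $f$ on $M'$, after a further change of the remaining coordinates we may assume $\D f(p)$ is a nonzero covector; the boundary-critical condition \eqref{eqn:LinearDependenceBoundaryCriticalPoint} says exactly that $\D f(p)$ is proportional to $\D x_0(p)$, hence in suitable coordinates $f = f(p) + a\,x_0 + (\text{terms vanishing to first order on }\partial\overline M)$ with $a \neq 0$. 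Since $f|_{\partial\overline M}$ has a Morse critical point at $p$ by hypothesis, the Morse lemma applied on the submanifold $\{x_0 = 0\}$ lets us arrange
\[
  f = f(p) + a\,x_0 \;-\; (y_1^2 + \dots + y_i^2) \;+\; (y_{i+1}^2 + \dots + y_{m-1}^2)
\]
in local coordinates $(x_0, y_1, \dots, y_{m-1})$ on $M'$, where $m = \dim M'$ and $i$ is the Morse index of $f|_{\partial\overline M}$. The sign of $a$ is precisely the outward/inward dichotomy: $a>0$ (outward) means $f$ increases as we move off the boundary into the complement of $\overline M$, while $a<0$ (inward) means $f$ increases as we move into $\overline M$.

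With this normal form in hand, the two cases are treated separately. In the outward case $a>0$: I would observe that the gradient-like vector field $-\nabla f$ (with respect to a metric adapted to the chart) points strictly into $\overline M$ along the part of $\partial\overline M$ near $p$ where $f$ is close to $b$, so the usual first-deformation-lemma flow can be run without ever pushing points out of $\overline M$. Away from $p$ the fiber $\overline M \cap f^{-1}(\{b\})$ contains no critical points by hypothesis, so the flow retracts $f^{-1}((-\infty, b+\varepsilon])$ onto $f^{-1}((-\infty, b-\varepsilon])$ for $\varepsilon$ small; properness of $f|_{\overline M}$ guarantees the flow is complete on the relevant compact region. In the inward case $a<0$: the vector field $-\nabla f$ now points out of $\overline M$ along the relevant part of the boundary, so the flow cannot be used to cross the level $b$ near $p$. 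Instead one localizes: by excision, the homotopy type change of $f^{-1}((-\infty, b+\varepsilon])$ relative to $f^{-1}((-\infty,b-\varepsilon])$ is computed in a neighborhood of $p$, where the explicit quadratic model above shows that passing from $b-\varepsilon$ to $b+\varepsilon$ attaches a handle whose core is the descending cell of the Morse function $-(y_1^2+\dots+y_i^2)$ on $\partial\overline M$ intersected with the half-space $\{x_0\le 0\}$; a direct inspection of the model shows this has the homotopy type of attaching a single cell of dimension $i$. This last point is exactly the content of the half-space version of the Morse handle attachment, and it is the computation carried out in \cite[Part I]{GoreskyMacPherson88} for the "Morse data" of a boundary critical point.

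\textbf{Main obstacle.} The genuinely delicate step is the inward case, and within it the claim that the local Morse data at $p$ really is a single $i$-cell rather than something more complicated. One must check that the pair
\[
  \bigl( \{x_0 \le 0,\ f \le b+\varepsilon\},\ \{x_0\le 0,\ f\le b-\varepsilon\} \bigr)
\]
in the model is homotopy equivalent to $(D^i, \partial D^i \times [0,1])$-type attaching data — i.e. that the extra half-space constraint $x_0 \le 0$ does not change the homotopy type of the handle, only its embedding. This is where one genuinely uses that $p$ is a \emph{boundary} critical point (regular for $f$ on $M'$) rather than an interior critical point: the $x_0$-direction contributes a \emph{contractible} factor to the local model, so it can be collapsed. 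Making this rigorous means either appealing directly to the normal-slice decomposition and the computation of tangential/normal Morse data in \cite{GoreskyMacPherson88}, or carrying out the deformation retraction in the quadratic model by hand. The remaining ingredients — existence of a gradient-like field adapted to the chart, completeness of its flow via properness, and excision to localize the homotopy change — are standard and I would only sketch them.
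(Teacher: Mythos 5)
The paper offers no proof of this lemma beyond the citation to Goresky--MacPherson, and your argument is exactly the standard deduction from their stratified Morse theory (tangential times normal Morse data for the two-stratum space $\overline M = M \cup \partial \overline M$, with the normal slice an inward arc whose lower half-link is a point in the outward case and empty in the inward case), so it is consistent with the paper's approach and correct. One small point to tighten: the normal form $f = f(p) + a\,x_0 - \sum_{j\le i} y_j^2 + \sum_{j>i} y_j^2$ does not follow directly from applying the Morse lemma on $\{x_0=0\}$ alone (a priori there remain cross terms in $x_0$ and $y$ and higher-order terms in $x_0$), but it is obtained by replacing $x_0$ with the new boundary defining function $(f - f(p) - Q(y))/a$, where $Q$ is the quadratic normal form of $f|_{\partial\overline M}$; this is legitimate because that function vanishes on $\partial\overline M$ near $p$ and has differential $\D x_0(p)$ there, hence cuts out the same half-space germ.
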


\begin{example}
  Consider ``Kenny's head'' (Figure \ref{fig:Kenny'sHead}), the manifold $\overline M$ given by the 
  intersection of the sphere $S^2 \subset \RR^3$ 
  with the half space $\{y \leq 1/ \sqrt{2} \}$. As usual we take
  the height function $z$ as a Morse function on $M$. There are 
  four critical points of $z$ on $\overline M$: Two on the interior 
  of $\overline M$ inside Kenny's throat and on the top of his head 
  and two boundary critical points close to his chin and on his forehead.
  Consider the spaces
  \[
    \overline M_c :=z^{-1}( (-\infty, c]) \cap \overline M
  \]
  for varying $c$. 
  It is easily seen that passing through 
  the critical value $-1/\sqrt{2}$ of the boundary critical point at 
  Kenny's chin does not change the homotopy type of 
  $\overline M_c$. Indeed, this is an outward critical point. 
  
  On the other hand, when we approach $c = 1/\sqrt{2}$, the critical 
  value coming from the critical point on the forehead, we attach a $1$-cell to 
  $\overline M_{c-\varepsilon}$. This is the case, because 
  the critical point on the forehead is inward pointing and the Morse 
  index of the restriction of $z$ to $\partial \overline M$ is 
  $1$.
  \begin{figure}[h]
  \centering
  \includegraphics[scale=1]{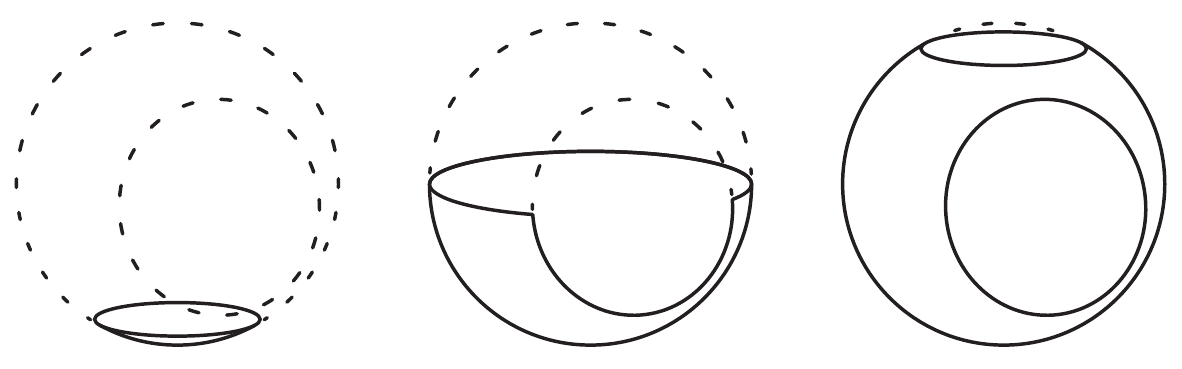}
  \caption{Slicing Kenny's head.}
  \label{fig:Kenny'sHead}
\end{figure}

\end{example}

\begin{lemma}
  In the setting of Theorem \ref{thm:InductionStepConnectivity}, let 
  $\lambda = |l|^2$ be the squared distance from $L$. 
  We say that $p \in \partial \overline Z \cap h^{-1}(\Delta_\delta\setminus\{0\})$  
  is a relative boundary critical point of $\lambda$, if it is 
  a boundary critical point on the fiber 
  \[
    \overline Z_t = h^{-1}(\{h(p)\}) \cap \rho^{-1}( [0,\varepsilon] )
  \]
  of $h$ over $t = h(p)$.
  We can choose an $\varepsilon >0$ small enough such that 
  \begin{enumerate}
    \item[i)] there are no relative boundary critical points in a 
      neighborhood of $\partial \overline Z_0^{\pitchfork}$;
    \item[ii)] there exists a $\delta >0$ such that 
      whenever $p \in \partial \overline Z$ is a relative boundary 
      critical point, then it is an outward critical point of 
      $\overline Z_{h(p)}$.
  \end{enumerate}
  \label{lem:OnlyOutwardCriticalPoints}
\end{lemma}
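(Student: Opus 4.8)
The plan is to prove both statements simultaneously by a careful analysis of the real-valued function $\lambda = |l|^2$ restricted to the fibers $\overline Z_t$, using a compactness/contradiction argument to fix $\varepsilon$ and $\delta$. First I would set up the key geometric object: since $L \in \Omega \cap \Omega'$ is transverse to the stratification $\Sigma$, the relative boundary critical points of $\lambda$ are governed by the linear dependence condition $\D(\lambda|_{\overline Z_{h(p)}})(p) = a \cdot \D(\rho|_{\overline Z_{h(p)}})(p)$ on the boundary stratum $\partial\overline Z \cap S_\alpha$. The content of (i) is that this dependence cannot occur on the central transversal link $\partial\overline Z_0^{\pitchfork}$: there, transversality of $L$ to $\Sigma$ forces $\D l$ to be nonzero and ``generic'' with respect to both the stratum and the function $\rho$, so $\lambda = |l|^2$ has no boundary critical points on $\partial\overline Z_0^\pitchfork$ at all; by openness this persists in a neighborhood.

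Next, for (ii), I would argue by contradiction using the Curve Selection Lemma. Suppose no admissible $\varepsilon, \delta$ exist. Then one can find a sequence of relative boundary critical points $p_i \to 0$, lying over parameters $t_i = h(p_i) \to 0$, which are \emph{inward} (i.e. have $a_i < 0$) or \emph{degenerate} (i.e. $a_i = 0$, meaning $p_i$ lies on the polar locus and $\lambda|_{\overline Z_{t_i}}$ is actually critical in the interior sense along $\partial\overline Z_{t_i}$). Applying the Curve Selection Lemma to the (constructible) set of such points, extract a real-analytic arc $\gamma(r) \to 0$ of relative boundary critical points with sign$(a) \leq 0$ along the arc. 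Passing to limits of tangent spaces along the arc and invoking the $a_h$-condition of the stratification $\Sigma$ together with the transversality $L \in \Omega'$, I would show the limiting configuration forces a relative boundary critical point on $\partial\overline Z_0$ — more precisely, on $\partial\overline Z_0^\pitchfork$ after accounting for where $l$ vanishes — with non-positive coefficient $a$. This contradicts (i), which says there are no relative boundary critical points there at all.

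The technical heart is controlling the \emph{sign} of the coefficient $a$ in the limit. The coefficient measures whether $\lambda = |l|^2$ increases or decreases as one moves off the boundary $\rho = \varepsilon$ into the interior $\rho < \varepsilon$ of $\overline Z_t$. The key observation is that near the origin $\rho$ is small, so moving inward along $\overline Z_t$ (decreasing $\rho$) means moving toward the central point where $l$ also tends to be small; combined with the transversality of $L$ this should force the radial direction to be an \emph{outward} direction for $\lambda$ — i.e. $a > 0$ — once $\varepsilon$ is small enough. Making this precise is where the Curve Selection Lemma argument does its real work: along the contradictory arc one estimates $\frac{d}{dr}\lambda(\gamma(r))$ versus $\frac{d}{dr}\rho(\gamma(r))$ and shows the ratio cannot stay non-positive as $r \to 0$. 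I expect this sign analysis — essentially a Łojasiewicz-type comparison of $|l|^2$ and $\rho = |y|^2$ along curves approaching $0$, uniform over the fibers $\overline Z_t$ — to be the main obstacle; everything else (existence of the arc, the $a_h$-limit argument, reduction to (i)) is a routine, if careful, application of the stratified-Morse-theory machinery already assembled in this section.
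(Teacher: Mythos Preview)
Your plan is close in spirit to the paper's proof but has a structural gap in the sign analysis for (ii). You run the Curve Selection Lemma on an arc $\gamma(r)\to 0$ of inward relative boundary critical points (letting both $\varepsilon$ and $\delta$ shrink) and then compare $\tfrac{d}{dr}\lambda(\gamma(r))$ with $\tfrac{d}{dr}\rho(\gamma(r))$. The trouble is that the coefficient $a$ you want to control is defined by the dependence $\D\lambda|_{K} = a \cdot \D\rho|_{K}$ on the \emph{fiber} tangent space $K = \ker \D h$, while $\dot\gamma(r)$ has a nontrivial $h$-component (since $h\circ\gamma$ is nonconstant along your arc). Hence the along-arc ratio is simply not $a(r)$, and its positivity says nothing direct about the sign of $a$. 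Relatedly, your reduction of (ii) to (i) is incomplete: the limit of a bad sequence lies on $\partial\overline Z_0$ but not a priori on $\partial\overline Z_0^{\pitchfork}$, so (i) alone cannot close the argument unless you already know something about $a$ on all of $\partial\overline Z_0$.

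The paper repairs both points by decoupling the two steps. It first proves the sign constraint $a\geq 0$ \emph{on the central fiber $Z_0$}: there $h$ is constant, so the fiber tangent equals the stratum tangent, a CSL arc $\gamma\subset Z_0$ satisfies $\dot\gamma\in K$, and the integral comparison of $\lambda\circ\gamma$ against $\rho\circ\gamma$ genuinely computes the sign of $a$ (both functions are non-negative and vanish at $0$, forcing the leading coefficient of $a(\tau)$ to be positive). Only then, with $\varepsilon$ fixed, does it take a sequence of inward points accumulating on $\partial\overline Z_0$ (not at the origin) and pass the linear-dependence relation together with the limit $a\leq 0$ to the limit point via Lemma~\ref{lem:technicalSublemma} and the $a_h$-condition; either $a<0$ contradicts the central-fiber sign result, or $a=0$ forces $l=0$ at the limit and then (i) applies. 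The idea you are missing is precisely this separation: run the \L{}ojasiewicz/CSL comparison on $Z_0$, where the arc tangent lies in $K$ and the comparison really is about $a$, and only afterward propagate to nearby fibers by a limit argument.
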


From the proof of this lemma we first extract a further technicality 
as a statement of its own.

\begin{lemma}
  Let $V$ be a finite dimensional vector space over a field $\mathbb K$ with 
  $\mathbb K = \RR$ or $\CC$. Let $(W_n)_{n\in \NN}$ be a sequence of 
  subspaces in $V$ converging to a limit space $W$ and let 
  $(\Phi_n)_{n\in \NN}$ and $(\Psi_n)_{n\in \NN}$ be linear forms on 
  $V$ converging to linear forms $\Phi$ and $\Psi$ respectively with 
  $\Psi \neq 0$. Suppose we have for each $n\in \NN$
  \[
    \Phi_n |_{W_n} = a_n \cdot \Psi_n |_{W_n}
  \]
  for some sequence of numbers $(a_n)_{n\in \NN}$ in $\mathbb K$. 
  Then $(a_n)_{n\in \NN}$ converges to some $a \in \mathbb K$ and 
  \[
    \Phi|_W = a \cdot \Psi|_W.
  \]
  \label{lem:technicalSublemma}
\end{lemma}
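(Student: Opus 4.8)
The plan is to treat the two cases $\Phi|_W = 0$ and $\Phi|_W \neq 0$ separately, after first disposing of the case $\dim W = 0$ trivially. The key observation is that the hypothesis $\Psi \neq 0$ forces, for $n$ large, $\Psi_n \neq 0$, and — crucially — that we must be careful because $\Psi_n|_{W_n}$ could still be zero even when $\Psi|_W \neq 0$, which would make $a_n$ undefined or non-unique. So the first real step is to split along whether $\Psi|_W = 0$ or not.

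Suppose first that $\Psi|_W \neq 0$. Then choose a vector $v \in W$ with $\Psi(v) = 1$. Since $W_n \to W$, we can choose vectors $v_n \in W_n$ with $v_n \to v$; then $\Psi_n(v_n) \to \Psi(v) = 1$, so for $n$ large $\Psi_n(v_n) \neq 0$ and $a_n = \Phi_n(v_n)/\Psi_n(v_n)$. Taking limits, $a_n \to \Phi(v)/\Psi(v) =: a$. To conclude $\Phi|_W = a\cdot\Psi|_W$, pick an arbitrary $w \in W$, approximate it by $w_n \in W_n$, pass to the limit in $\Phi_n(w_n) = a_n \Psi_n(w_n)$, and get $\Phi(w) = a\,\Psi(w)$. (Here ``$W_n \to W$'' is understood in the Grassmannian, so one uses that a convergent sequence of subspaces admits, for every vector in the limit, an approximating sequence of vectors in the $W_n$; this is the only point where the topology of the Grassmannian enters, and it is standard.)

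Now suppose $\Psi|_W = 0$. The claim then reduces to showing $\Phi|_W = 0$ as well, after which any $a$ works (or one picks $a = 0$; note the statement only asserts existence of a limit $a$ and the identity, not uniqueness, and in this degenerate situation $a$ genuinely need not be unique — however $a_n$ itself is a well-defined sequence once $\Psi_n|_{W_n} \neq 0$, and we still must show it converges). The subtlety is that $a_n$ could a priori blow up. I expect \emph{this} to be the main obstacle: controlling $a_n$ when the denominators $\Psi_n|_{W_n}$ are degenerating. The resolution is that this case does not actually occur under the hypotheses as used in Lemma~\ref{lem:OnlyOutwardCriticalPoints} — there $\Psi_n = \D t$ defines the boundary and is nonzero on the relevant tangent spaces uniformly — but to prove the sublemma as stated in full generality one argues as follows: if $\Psi_n|_{W_n} \equiv 0$ for infinitely many $n$, then $a_n$ is not even defined, so implicitly the hypothesis includes $\Psi_n|_{W_n}\neq 0$; pick $v_n \in W_n$ with $|v_n| = 1$ and $|\Psi_n(v_n)|$ maximal, pass to a subsequence with $v_n \to v \in W$, and observe $\Psi(v) = \lim \Psi_n(v_n)$; if this limit is nonzero we are back in the previous case applied to this subsequence, and if it is zero then $\sup_{|w|=1, w\in W_n}|\Psi_n(w)| \to 0$, forcing — via $\Phi_n|_{W_n} = a_n\Psi_n|_{W_n}$ and boundedness of $\Phi_n$ — that either $a_n \to 0$ (if $\Phi_n|_{W_n}$ also degenerates at a comparable rate) or a contradiction with $\Phi_n \to \Phi$ being bounded. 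A clean way to package this: $|a_n| = \|\Phi_n|_{W_n}\| / \|\Psi_n|_{W_n}\|$ in any fixed norm, and one shows the numerator is $O(\|\Psi_n|_{W_n}\|)$ because $\Phi_n|_{W_n}$ is a scalar multiple of $\Psi_n|_{W_n}$ — so the ratio is exactly $|a_n|$, and its boundedness must be extracted from the convergence $\Phi_n \to \Phi$, $\Psi_n \to \Psi$ together with $W_n \to W$ by a compactness argument on the unit sphere of $V$. Once $(a_n)$ is bounded, extract a convergent subsequence $a_{n_k} \to a$, pass to the limit in $\Phi_{n_k}(w_{n_k}) = a_{n_k}\Psi_{n_k}(w_{n_k})$ to get $\Phi|_W = a\,\Psi|_W$, and finally note that if two subsequences gave limits $a \neq a'$ then $(a - a')\Psi|_W = 0$, i.e.\ $\Psi|_W = 0$ — consistent with this case — and in that situation the full sequence $a_n$ converges because $\Phi_n(v_n)/\Psi_n(v_n)$ for a judiciously fixed-direction $v_n$ still has a forced limit; the cleanest fix is simply to prove convergence of $(a_n)$ directly by the squeeze $a_n = \Phi_n(v_n)/\Psi_n(v_n)$ for any choice $v_n \to v \in W$ with $\Psi(v)\neq 0$ when that is possible, and to remark that when no such $v$ exists the statement is vacuous in the application. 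I would write the proof for the case actually needed ($\Psi|_W \neq 0$, which holds in Lemma~\ref{lem:OnlyOutwardCriticalPoints} since $\D t$ cuts out a boundary and hence restricts nontrivially to the tangent space of the ambient stratum) and relegate the degenerate case to a remark.
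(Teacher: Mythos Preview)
Your treatment of the nondegenerate case $\Psi|_W \neq 0$ is exactly the paper's proof: pick $v\in W$ with $\Psi(v)\neq 0$, approximate by $v_n\in W_n$, read off $a_n=\Phi_n(v_n)/\Psi_n(v_n)\to\Phi(v)/\Psi(v)=:a$, then pass to the limit on an arbitrary $w\in W$. The paper does nothing more; its very first line is ``Let $v\in W$ be a vector with $\Psi(v)\neq 0$'', so it silently assumes $\Psi|_W\neq 0$ and never addresses the other case.

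Your instinct that the case $\Psi|_W=0$ is genuinely problematic is correct, but your attempt to salvage it cannot succeed: the lemma as literally stated is \emph{false} in that case. Take $V=\mathbb K^2$, $W_n=\Span\bigl((1,\tfrac{1}{n})\bigr)\to W=\Span(e_1)$, $\Phi_n=\Phi=e_1^*$, $\Psi_n=\Psi=e_2^*$. Then $\Psi\neq 0$, and on $W_n$ one has $\Phi_n=n\cdot\Psi_n$, so $a_n=n\to\infty$. Thus no boundedness or compactness argument on the unit sphere will rescue the degenerate case; the long paragraph trying to control $|a_n|=\|\Phi_n|_{W_n}\|/\|\Psi_n|_{W_n}\|$ is chasing something that is not there.

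So your final recommendation is the right one, and it is in fact what the paper (tacitly) does: prove only the case $\Psi|_W\neq 0$, and observe that in both applications inside Lemma~\ref{lem:OnlyOutwardCriticalPoints} the form $\Psi$ (namely $\D^{1,0}\rho$, respectively $\D\rho$) restricts nontrivially to $T_pS_\beta\subset K=W$ because $\rho$ has no critical points on the strata away from $\{\rho=0\}$. Replace your degenerate-case discussion by the two-line counterexample above and a one-line remark that the hypothesis $\Psi|_W\neq 0$ holds where the lemma is used.
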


\begin{proof}
  Let $v \in W$ be a vector with $\Psi(v) \neq 0$ and choose a 
  sequence of vectors $(v_n)_{n\in \NN}$ with $v_n \in W_n$ and 
  $\lim_{n\to \infty} v_n = v$. Then due to continuity 
  \[
    a_n \cdot \Psi_n(v_n) = \Phi_n(v_n) \overset{n\to \infty}{\longrightarrow} 
    \Phi(v) = a \cdot \Psi(v)
  \]
  for some $a \in \mathbb K$. We immediately deduce that for $n$ big enough
  \[
    a_n = \frac{\Phi_n(v_n)}{\Psi_n(v_n)} 
    \overset{n\to\infty}{\longrightarrow}
    \frac{\Phi(v)}{\Psi(v)} = a,
  \]
  so in particular $(a_n)_{n\in \NN}$ converges. It follows a posteriori 
  that therefore also 
  \[
    \Phi|_W = a\cdot \Psi|_W
  \]
  as linear forms on $V$.
\end{proof}

\begin{proof}(of Lemma \ref{lem:OnlyOutwardCriticalPoints})
  We may choose the representative $Z$ so that we can 
  assume that $h$ has no critical points on $Z$ outside $Z_0$ and 
  also $\rho$ does not have critical points on $Z$ outside $\{ \rho = 0 \}$.
  Since $Z_0$ is a union of strata and the hyperplane $L$ is chosen transversal 
  to the stratification, this holds in particular for $Z_0$ and $Z_0^\pitchfork$. 
  For a point $p \in S_\alpha \subset Z$ let 
  $Z_{h(p)} = h^{-1}(\{h(p)\})$ be the fiber of $h$ through this point. 
  We denote its tangent space by
  \[
    K(p) = \ker \D h|_{S_\alpha}(p) = T_p h^{-1}(\{h(p)\}) = T_p Z_{h(p)}.
  \]
  In particular $K(p) = T_p S_\alpha$, whenever $p \in Z_0$, since $h$ 
  is constant on $Z_0$. The $a_h$-condition moreover assures that 
  for any sequence of points $(p_n)_{n\in \NN} \subset S_\alpha$ 
  converging some $p \in S_\beta$ one has 
  \[
    \lim_{n\to \infty} K(p_n) \supset K(p)
  \]
  whenever the limit exists.
  
  We first show $i)$. Certainly $\D \lambda(p) = 0$ at any $p \in L$. 
  However, by the choice of $L$ and $\varepsilon$ with respect to $Z$ we 
  have that for any $p \in \partial \overline Z_0^\pitchfork$, 
  $p \in S_\beta \subset Z_0$, the real differentials 
  \[
    \Re \D l(p), \quad \Im \D l(p), \quad \textnormal{and} \quad \D \rho(p)
  \]
  are linearly independent over $\RR$ on $T_p S_\beta$. 
  This is equivalent to saying that 
  \[
    \D l(p) \quad \textnormal{ and } \quad \D^{1,0} \rho(p) 
  \]
  are linearly independent over $\CC$, where $\D^{1,0} \rho(p)$ is 
  the holomorphic part of the differential of $\rho$ considered as 
  a map to $\CC$. Now suppose there was a sequence 
  $(p_n)_{n\in \NN}$ of points in some adjacent stratum $S_\alpha$ 
  converging to $p$ and such that at each $p_i$ one has 
  \begin{eqnarray*}
    \D l(p_n)|_{K(p_n)} &=&  c_n \cdot \D^{1,0} \rho(p_n)|_{K(p_n)}
  \end{eqnarray*}
  for some sequence of complex numbers $c_n$.
  Passing to a subsequence, if necessary, we may assume that the limit 
  $\lim_{n\to \infty} K(p_n) = K$ exists. Applying Lemma 
  \ref{lem:technicalSublemma} we obtain a contradiction to the linear 
  independence of $\D l(p)$ and $\D^{1,0} \rho(p)$ on $T_p S_\beta \subset K$.
  We infer that there exists an open neighborhood $U$ of 
  $\partial \overline Z_0^\pitchfork$ in $Z$ such that for each stratum 
  $S_\alpha$ and every point $q \in U \cap S_\alpha$ the differentials 
  $\D l(q)$ and $\D^{1,0} \rho(q)$ are linearly independent over $\CC$. 
  
  It is now easy to see from the formula 
  \[
    \D \lambda = 2 \Re (\overline l \cdot \D l)
  \]
  that on this open neighborhood also $\D \lambda$ and $\D \rho$ must 
  be linearly independent, whenever $\overline l \neq 0$, i.e. outside 
  $L$. The points on $L \cap \overline Z \cap U$ on the other hand 
  cannot be relative boundary critical points, since they are critical 
  points of $\lambda$ on the fiber $Z_t$ already. 

  Concerning $ii)$: It has been shown in \cite{Zach17}, Lemma 2.3.8,
  that we can 
  choose $\varepsilon>0$ small enough such that on $\overline Z_0$ 
  the following holds: Whenever in a point 
  $p \in \partial \overline Z_0$, $p \in S_\beta$, we have a linear 
  dependence 
  \begin{equation}
    \D \lambda(p)|_{T_p S_\beta} = a \cdot \D \rho(p)|_{T_p S_\beta},
    \label{eqn:LinearDependenceEquation}
  \end{equation}
  then $a \geq 0$ with equality if and only if 
  $p \in \partial \overline Z_0^\pitchfork$.

  We briefly recall the argument. It can be shown that 
  \[
    C^- = \overline{\{ p \in \partial \overline Z_0: 
      \D \lambda (p)|_{T_p S_\beta} = a \cdot \D \rho(p)|_{T_p S_\beta}, a <0 \}}
  \]
  is a real semi-analytic set. If the point $0 \in Z$ was in 
  $C^-$, then we can choose a real 
  analytic path $\gamma : [0, t_1) \to C^-$ with $\gamma(0) = 0$ and 
  $\gamma(t) \notin C^- \cap L$ for all $0<t< t_1$. 
  Comparing the integrals 
  \[
    \lambda(\gamma(t)) = \int_0^t \D \lambda(\gamma(\tau)) \dot\gamma(\tau) \D \tau 
    \quad \textnormal{ and } \quad
    \rho(\gamma(t)) = \int_0^t \D \rho(\gamma(\tau)) \dot\gamma(\tau) \D \tau
  \]
  we see that 
  \[
    a(\tau) = \frac{\D \lambda(\gamma(\tau))\dot \gamma(\tau)}{\D \rho(\gamma(\tau))\dot\gamma(\tau)}
  \]
  has a Laurent expansion at $\tau = 0$ and hence either $a(\tau)$ or $1/a(\tau)$ extends 
  to a real analytic function $\alpha$ on $[0,t_1)$. 
  Since both $\lambda$ and $\rho$ only assume non-negative values, the leading 
  coefficient of this function must be positive -- a contradiction to 
  $a<0$ in the construction of $C^-$.

  We can now finish the proof of $ii)$. Suppose there exists a point 
  $p \in \partial \overline Z_0$, which is the limit point of 
  a sequence $(p_n)_{n\in \NN}$ of relative boundary critical points 
  in the open stratum $S_0$. Then at each point $p_n$ we have 
  \[
    \D \lambda(p_n)|_{T_{p_n}S_0} = a_n \cdot \D \rho(p_n)|_{T_{p_n}S_0}
  \]
  with $a_n\leq 0$.
  Another application of Lemma \ref{lem:technicalSublemma} shows that in 
  this case we would have 
  \[
    \D \lambda(p)|_{T_p S_\alpha} = a \cdot \D \rho(p)|_{T_p S_\alpha},
  \]
  $S_\alpha$ the stratum containing $p$ and $a = \lim_{n\to \infty} a_n \leq 0$.
  If $a < 0$ this is a contradiction to (\ref{eqn:LinearDependenceEquation}). 
  If $a = 0$, then $\D \lambda(p) = 0$. But then $p\in L$ and we saw in 
  $i)$ that $p$ cannot be a limit of relative boundary critical points.

  We deduce that there exists an open neighborhood $V$ of $\partial \overline Z_0$ 
  in $Z$ such that no point in $V$ is a relative inward boundary critical point.
  Choose $\delta>0$ small enough such that $\partial \overline Z \cap h^{-1}(\Delta_\delta)$
  is contained in $V$. 
  
\end{proof}

\begin{proof}{(of Theorem \ref{thm:InductionStepConnectivity})}
  Fix an $\varepsilon>0$ and choose $\delta>0$ as in Lemma 
  \ref{lem:OnlyOutwardCriticalPoints}. Choose $t \in \Delta_\delta$, $t\neq 0$ 
  and consider $\lambda = |l|^2$ on $\overline Z_t$.

  Taking a slight perturbation $\tilde l$ 
  of the equation $l$, we may assume that 
  $\tilde \lambda = |\tilde l|^2$ is a Morse function on the manifold with boundary 
  $\overline Z_t$, see \cite{GoreskyMacPherson88}[Chapter 1, Example 2.2.4]. 
  We may also choose a regular value $l_0$ of $\tilde l$ on $\overline Z_t$ 
  close to $0$ and recenter $\tilde l$ at $l_0$ so that 
  $\overline Z_t^{\pitchfork}$ is smooth, while retaining the Morse properties. 
  Furthermore, since all the boundary critical points of $\lambda$ on 
  $\overline Z_t$ were outward and $\tilde \lambda$ can be chosen arbitrarily 
  close to $\lambda$ in the Whitney topology of smooth functions, 
  also the boundary critical points of $\tilde \lambda$ are outward. 
  With these choices we replace $l$ by $\tilde l$ and $\lambda$ by 
  $\tilde \lambda$ in what follows. 

  Since $l : \overline Z_t \to \CC$ is proper, the set of critical values 
  is closed and we find $\eta>0$ such that 
  \[
    l\colon l^{-1}(\overline \Delta_\eta) \cap \overline Z_t \to \overline \Delta_\eta
  \]
  is a trivial bundle 
  $\lambda^{-1}( [0,\eta] ) \cap \overline Z_t \cong 
  \overline Z_t^\pitchfork \times \Delta_\eta$. 
  For any $c \in [\eta,\infty)$ denote $\overline Z_t \cap \lambda^{-1}([0,c])$
  by $\overline Z_t^{\leq c}$.

  As remarked earlier, the real valued function $\lambda$ has real Morse 
  singularities of index $n = \dim_\CC \overline Z_t$ at each of 
  the complex Morse points of $l$ in the interior of $\overline Z_t$. 
  Consequently, up to homotopy, we attach an $n$-cell to 
  $\overline Z_t^{\leq c}$ each time $c$ passes through one of the 
  critical values of these points. 
  On the other hand, whenever $c$ crosses a critical value of a
  boundary critical point $p \in \partial \overline Z_t$, the 
  homotopy type of $\overline Z_t^{\leq c}$ does not change 
  according to Lemma \ref{lem:RelativeMorseTheory}, because 
  $p$ is an outward critical point.
  This finishes the proof.
\end{proof}

\begin{corollary}
  Suppose $h : (Z,0) \to (\CC,0)$ as in Theorem \ref{thm:InductionStepConnectivity} 
  is the smoothing of a complete intersection singularity of dimension $n$. 
  Then the associated 
  Milnor fiber $\overline Z_t$ has the homotopy type of a bouquet of spheres 
  of real dimension in the range from $n-d$ to $n$. In the extremal case 
  $n = d$ this means we allow finitely many components.
  \label{cor:KatoMatsumotoGeneralization}
\end{corollary}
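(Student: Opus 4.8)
The plan is to run an induction on $d = \dim \Sing(Z_0)$ using the transversal hyperplane slicing provided by Theorems \ref{thm:LeHyperplaneFibration} and \ref{thm:InductionStepConnectivity}. The base case $d = 0$ is the classical Hamm theorem \cite{Hamm72}: a smoothing of an isolated complete intersection singularity of dimension $n$ has the homotopy type of a bouquet of $n$-spheres, which is precisely the asserted range $[n-0, n] = \{n\}$. For the inductive step, suppose the statement holds for all smoothings of complete intersection singularities whose singular locus has dimension $< d$. Given $(Z_0,0)$ with $\dim \Sing(Z_0) = d$ and dimension $n$, choose a hyperplane $L = \{l = 0\}$ in $\Omega \cap \Omega'$ as in Section \ref{sec:Non-IsoCompleteInter}, so that the transversal singularity $(Z_0^\pitchfork,0) = (Z_0 \cap L, 0)$ is again a complete intersection — now of dimension $n - 1$ — with $\dim \Sing(Z_0^\pitchfork) = d - 1$. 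Note that the hyperplane slice of a complete intersection by a sufficiently generic hyperplane through $0$ is still a complete intersection; this is where transversality to the stratification $\Sigma$ is used, and it guarantees the induced deformation $h|_{\overline Z^\pitchfork} : \overline Z^\pitchfork \to \Delta$ is a smoothing of $(Z_0^\pitchfork, 0)$.

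Next I would apply the inductive hypothesis to the smoothing $\overline Z_t^\pitchfork$ of $(Z_0^\pitchfork, 0)$: it has the homotopy type of a bouquet of spheres of real dimensions in the range $[(n-1) - (d-1),\, n-1] = [n-d,\, n-1]$. Now Theorem \ref{thm:InductionStepConnectivity} states that, up to homotopy, $\overline Z_t$ is obtained from $\overline Z_t^\pitchfork$ by attaching cells of real dimension exactly $n$. Attaching $n$-cells to a CW complex $K$ can only affect homology in degrees $n$ and $n-1$: it kills some of $H_{n-1}(K)$ (those cycles becoming boundaries) and creates new classes in $H_n$. Since $K = \overline Z_t^\pitchfork$ already has homology only in $[n-d, n-1]$, the result $\overline Z_t$ has (reduced) homology concentrated in degrees $[n-d, n]$. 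Finally I would invoke the standard fact that a smoothing of a complete intersection singularity is simply connected (for $n \geq 2$; the case $n = 1$ is handled separately as one is then slicing down to points and $d \leq 1$), so that the space $\overline Z_t$, being simply connected with homology only in the stated range of degrees, has the homotopy type of a bouquet of spheres in those dimensions by a Hurewicz-and-Whitehead argument. In the extremal case $n = d$, the range is $[0, n]$; degree-$0$ reduced homology signals that $\overline Z_t$ may be disconnected, hence the remark about finitely many components, and the bouquet statement is then understood componentwise.

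The main obstacle I anticipate is bookkeeping the hypotheses through the induction: one must verify that the generic hyperplane $L \in \Omega \cap \Omega'$ can be chosen simultaneously to make $Z_0^\pitchfork$ a complete intersection, to control $\dim \Sing(Z_0^\pitchfork) = d - 1$, and to be compatible with the proper representative data (the function $\rho$ and the Milnor ball) so that Theorem \ref{thm:InductionStepConnectivity} applies at each stage — and crucially that the same choices propagate to the next slice. This is essentially the content already assembled in Section \ref{sec:Non-IsoCompleteInter} (Zariski openness of $\Omega$ and $\Omega'$, the behavior of $l$ on strata of $\Sing(Z_0)$), so the corollary should follow by carefully threading those statements together. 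A secondary point requiring care is the simple-connectivity input: for complete intersections of dimension $\geq 2$ this is classical, but one should confirm it survives the non-isolated setting and the unconventional Milnor ball, which again reduces to the fibration and conical-structure statements (Theorem \ref{thm:FibrationTheorem}, Corollary \ref{cor:ConicalStructure}) established earlier.
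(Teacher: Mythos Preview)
Your inductive scheme --- induction on $d$, Hamm's theorem for the base $d=0$, and Theorem~\ref{thm:InductionStepConnectivity} for the step --- is precisely the paper's argument; the paper is in fact terser than you and writes only ``the claim follows'' after applying the induction hypothesis to the slice $\overline Z_t^\pitchfork$.

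The one place your added justification goes wrong is the Hurewicz--Whitehead sentence. The assertion ``simply connected with homology only in the stated range implies homotopy equivalent to a bouquet of spheres'' is false as a general principle: $\mathbb{CP}^2$ is simply connected with free homology concentrated in degrees $2$ and $4$, yet is not a wedge of spheres, since the Hurewicz map $\pi_4(\mathbb{CP}^2)\to H_4(\mathbb{CP}^2)$ vanishes and no map from $S^2\vee S^4$ can hit a generator of $H_4$. (Your simple-connectivity input is also too optimistic: the Milnor fiber is only guaranteed to be $(n{-}d{-}1)$-connected, so simple connectivity can fail once $d\geq n-1$.) What the induction \emph{does} deliver cleanly is that $\overline Z_t$ is obtained from a wedge of spheres by attaching $n$-cells, hence is $(n{-}d{-}1)$-connected with reduced homology concentrated in degrees $[n{-}d,n]$. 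Upgrading this to a literal bouquet would require control of the attaching maps that neither you nor the paper supplies. Since the only downstream use of the corollary is the vanishing of rational homology outside $[n{-}d,n]$, which your argument does establish, the gap is harmless for the paper's applications --- but you should not claim the full homotopy type without further work.
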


\begin{proof}
  We proceed by induction on the dimension $d$ of the singular locus of $(Z_0,0)$.
  If $d=0$, then $(Z_0,0)$ is an isolated singularity and this is nothing but 
  Hamm's result \cite{Hamm72}. 
  In the particular case where $n = d = 0$, the map $h \colon (Z,0) \to (\Delta,0)$ 
  is a branched covering and $\overline Z_t$ consists of finitely many points.

  Now suppose the theorem has been established for $d$ in the range 
  $0 \leq d < k$ for some $k$ and we are 
  given a nonisolated singularity $(Z_0,0)$ with singular locus of dimension $k$.
  According to Theorem \ref{thm:InductionStepConnectivity} the space 
  $\overline Z_t$ has the homotopy type of $\overline Z_t^{\pitchfork}$ with 
  attached handles of real dimension $n$. Since $\overline Z_t^{\pitchfork}$ is 
  the Milnor fiber of a non-isolated complete intersection 
  singularity with singular locus of dimension 
  $k-1$, the claim follows. 
\end{proof}

\begin{remark}
  Note that Corollary \ref{cor:KatoMatsumotoGeneralization} gives bounds 
  on the homology of nearby fibers $\overline Z_t$ in \textit{any} smoothing of 
  a given complete intersection singularity $(Z_0,0)$, while the detailed topology of 
  these fibers $\overline Z_t$ will depend 
  on the choice of such a smoothing, i.e. on the embedding of $(Z_0,0)$ 
  in the family 
  \[
    (Z_0,0) \hookrightarrow (Z,0) \overset{h}{\longrightarrow} (\CC,0).
  \]
  Once this family has been chosen, we find ourselves in the 
  convenient setting of a hypersurface singularity given by the function 
  $h$ 
  on a singular ambient space $(Z,0)$ and 
  it makes sense to speak of the Milnor fibration of $h$ on $(Z,0)$ as 
  stated in Corollary \ref{cor:KatoMatsumotoGeneralization}.
  We may then apply 
  Theorems \ref{thm:FibrationTheorem}, \ref{thm:LeHyperplaneFibration}, and 
  \ref{thm:InductionStepConnectivity} in the provided setting.
\end{remark}

\section{Alternating Homology}

In this section we finally proof our Main Theorem \ref{thm:MainTheorem}.
A key ingredient will be a spectral sequence coming from the alternating
homology of the multiple point spaces, which we now recall. The original
construction of \cite{GoryunovMond93} uses cohomology. In \cite{Goryunov95} 
there is a version for homology. See also \cite{Mond16} and the recent 
preprint \cite{CisnerosMond18}.
The homological and the cohomological versions are dual to each other when using
coefficients in a field ($\QQ$ in our case), so we have chosen homology,
as it is more visual. We restrict our exposition to the objects which are
needed for the proof of our main theorem. 

\subsection{The spectral sequence for the homology of the image}
Given a finite map $f\colon X\to Y$, let $C_q(D^k)$ be the module of
$q$-dimensional singular chains on the multiple point space $D^k$,
with coefficients in $\QQ$. Recall that, for convenience, this includes
the case of $X=D^1$. For $k\geq 2$, the natural action of the symmetric
group $S_k$ on $D^k$ by permutation of the factors of $D^k\subseteq X^k$
induces a linear action on $C_q(D^k)$. The submodule of $k$-dimensional
alternating chains of $C_q(D^k)$ is defined as \[
  C_q^{\rm Alt}(D^k)= \left\{h\in C_q(D^k)\mid \sigma * h={\rm
  sign}(\sigma)\cdot h, \text{ for all }\sigma\in S_k \right\}.
\] For $k=1$, we set  $C_q^{\rm Alt}(D^1)=C_q(D^1)$.

We also have two morphisms from each $C_q(D^k)$: the usual boundary
operator \[
  \partial\colon C_q(D^k)\to C_{q-1}(D^k),
\] and the morphism \[
  \varphi \colon C_q(D^k)\to C_q(D^{k-1}),
\] given by $\varphi =\sum_{i=1}^{k}(-1)^i(\epsilon^{i,k})^\sharp,$
where $(\epsilon^{i,k})^\sharp$ are induced on chains by the maps
$\epsilon^{i,k}\colon D^{k}\to D^{k-1}$.

It is easy to see that $\partial$ and $\varphi$ take $C_q^{\rm
Alt}(D^k)$ to $C_{q-1}^{\rm Alt}(D^k)$ and $C_q^{\rm Alt}(D^{k-1})$,
respectively. These restricted morphisms give rise to a double complex
$$ \xymatrix{
 \vdots \ar[d]^{\partial}&\vdots \ar[d]^{\partial}&\vdots
 \ar[d]^{\partial} & \\
C_2^{\rm Alt}(D^{1})	  \ar[d]^{\partial}	& C_2^{\rm Alt}(D^{2})
\ar[l]_{\varphi}  \ar[d]^{\partial}	& C_2^{\rm Alt}(D^{3})
\ar[l]_{\varphi}  \ar[d]^{\partial}	 &\ar[l]_{\varphi} \cdots\\
C_1^{\rm Alt}(D^{1})	  \ar[d]^{\partial}	& C_1^{\rm Alt}(D^{2})
\ar[l]^{\varphi}  \ar[d]^{\partial}	& C_1^{\rm Alt}(D^{3})
\ar[l]^{\varphi}  \ar[d]^{\partial}	 &\ar[l]^{\varphi} \cdots\\
C_0^{\rm Alt}(D^{1})		& C_0^{\rm Alt}(D^{2})	  \ar[l]^{\varphi}
& C_0^{\rm Alt}(D^{3})	 \ar[l]^{\varphi}	  &\cdots \ar[l]^{\varphi}
}$$

Associated to this double complex there is a spectral sequence
$E_{\bullet, \bullet}$, whose first
page has as entries the homology groups
\[
  E_{p,q}^1 = H_{q}^{\Alt} (D^{p+1}):= H_q\left(
  C_{\bullet}^{\Alt}(D^{p+1}),\partial \right)
\] 
of the column complexes. Since we are working over $\QQ$, we may form
the alternating projection operator 
\[
  \Alt : C_q(D^k) \to C_q^{\Alt} (D^k), \quad w \mapsto \frac{1}{k!}
  \sum_{\sigma \in S_k} \sign(\sigma) \cdot \sigma * w,
\] 
and, because $\Alt$ commutes with the boundary operator $\partial$,
we have a natural extension of $\Alt$ to the homology groups $H_q(D^k)$. 
Let $\Alt H_q(D^k)$ be subgroup of alternating homology groups -- 
the image of $\Alt$ in $H_q(D^k)$. Then we have isomorphisms
\begin{equation}
  \Alt H_q(D^k) \cong H_q^{\Alt}(D^k).
  \label{eqn:IdentificationAlternatingHomology}
\end{equation}
The fact that we can regard $H_q^{\Alt}(D^k)$ as a subobject of
$H_q(D^k)$ leads to a crucial observation: \\

\textit{
  $H_q^{\Alt}(D^k)$ is zero if $D^k$ has trivial homology in dimension
  $q$.
}\\

\begin{remark}
  The identification of $H^{\Alt}_q(D^k)$ with a subgroup of $\Alt H_q(D^k)$ 
  relies on the choice of the coefficients in a field of characteristic zero 
  so that the alternating operator $\Alt$ can be defined as an idempotent 
  operator. For multiple point spaces of $\mathcal A$-finite maps of corank $1$, 
  which are ICIS with a finite group action, Goryunov proved in \cite{Goryunov95} that 
  (\ref{eqn:IdentificationAlternatingHomology}) also holds over the 
  integers for their stabilizations. However, his proof relies on the singularities 
  being isolated, which can not be assumed to be the case in our setup.
\end{remark}

\noindent
The importance of the alternating homology becomes clear in the 
next theorem.

\begin{theorem}
  Let $f \colon X \to Y$ be a finite stable map with image $Z \subset Y$.
  Then the spectral sequence
  \[
    E^1_{p,q} = H^{\rm Alt}_q(D^{p+1}) \Longrightarrow H_{p+q}(Z)
  \]
  in alternating homology from the double complex above converges 
  to the homology of $Z$. 
  \label{thm:SpectralSequenceForAlternatingHomology}
\end{theorem}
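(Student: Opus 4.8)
The plan is to realise the double complex as computing $H_*(Z;\QQ)$ via a resolution of the constant sheaf $\underline{\QQ}_Z$ built from the multiple point spaces, in the spirit of \cite{GoryunovMond93} and \cite{Goryunov95}; the homological statement then follows from the cohomological one by duality over the field $\QQ$, as already noted above.

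First I would construct the sheaf resolution on $Z$. Using that each $\epsilon^k$ is finite (cf. the Lemma above), the complex of pushforwards
\[
  0 \longrightarrow \underline{\QQ}_Z \longrightarrow f_*\underline{\QQ}_X \longrightarrow (\epsilon^2_*\underline{\QQ}_{D^2})^{\Alt} \longrightarrow (\epsilon^3_*\underline{\QQ}_{D^3})^{\Alt}\longrightarrow \cdots
\]
(with differentials the alternating sums of the maps induced by the $\epsilon^{i,k}$) makes sense, and I claim it is exact. This is checked on stalks at a point $y\in Z$: finiteness identifies the stalk of $(\epsilon^k_*\underline{\QQ}_{D^k})^{\Alt}$ with the alternating part of $\QQ[(\epsilon^k)^{-1}(y)]$; since for a stable map one has $D^k=\overline{\{\text{strict }k\text{-multiple points of }f\}}$ by definition, the fibre $(\epsilon^k)^{-1}(y)$ contains all injective $k$-tuples of points of $f^{-1}(y)$, and these form free $S_k$-orbits, one for each $k$-subset of $f^{-1}(y)$; every remaining point of the fibre has a repeated coordinate, hence a transposition in its stabiliser, and therefore contributes nothing to the alternating part. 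Writing $r=\#f^{-1}(y)$, the complex of stalks is thus the Koszul complex $\textstyle\QQ\to\QQ^r\to\bigwedge^2\QQ^r\to\cdots$ with differential given by wedging with $e_1+\dots+e_r$, which is exact because $r\ge 1$. Hence the sheaf complex above is a resolution of $\underline{\QQ}_Z$.

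Next I would run the hyper(co)homology spectral sequence of this resolution. One of its two spectral sequences has abutment $H^{*}(Z;\QQ)$; the other has first page
\[
  E_1^{p,q}=H^q\big(Z,(\epsilon^{p+1}_*\underline{\QQ}_{D^{p+1}})^{\Alt}\big).
\]
Since $\epsilon^{p+1}$ is finite, $R^{>0}\epsilon^{p+1}_*\underline{\QQ}_{D^{p+1}}=0$, so $H^q(Z,\epsilon^{p+1}_*\underline{\QQ}_{D^{p+1}})=H^q(D^{p+1};\QQ)$; and since $\Alt$ is an exact idempotent in characteristic zero this gives $E_1^{p,q}=H^q(D^{p+1};\QQ)^{\Alt}=H^q_{\Alt}(D^{p+1})$, converging to $H^{p+q}(Z;\QQ)$. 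Because the isomorphism $H^q(D^{p+1};\QQ)\cong H_q(D^{p+1};\QQ)^{\vee}$ is $S_{p+1}$-equivariant and $\Alt$ is self-dual, $E_1^{p,q}\cong H^{\Alt}_q(D^{p+1})^{\vee}$, so the whole spectral sequence is the $\QQ$-dual of one with $E^1_{p,q}=H^{\Alt}_q(D^{p+1})$ abutting to $H_{p+q}(Z;\QQ)$; a routine comparison of the sheaf resolution with the complex of sheaves of singular cochains on the $D^k$ identifies this dual spectral sequence with the one obtained from the double complex in the statement by filtering by the column degree, whose first page is $H_q(C^{\Alt}_\bullet(D^{p+1}),\partial)=H^{\Alt}_q(D^{p+1})$.

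The main obstacle is the stalkwise exactness in the second paragraph, and within it the control of the fibres $(\epsilon^k)^{-1}(y)$ over the instability locus: one must know that, apart from the strict multiple points — which by definition are exactly the injective tuples — the space $D^k$ contributes only points with a repeated coordinate, so that stability of $f$, equivalently $D^k$ being the analytic closure of the strict stratum, enters essentially. Everything after that is routine manipulation of first-quadrant bicomplexes and of spectral sequences over a field. One may alternatively phrase the same argument with the augmented semisimplicial space of iterated fibre products $X\times_Z\cdots\times_Z X$, whose realisations over $Z$ have contractible fibres, and pass to $\QQ$-homology by a Vietoris--Begle argument; but the sheaf-theoretic formulation keeps the bookkeeping shortest.
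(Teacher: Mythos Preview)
The paper does not actually prove this theorem; after stating it, it simply writes ``For different proofs see \cite{GoryunovMond93}, \cite{Goryunov95}, and \cite{CisnerosMond18}.'' Your sketch is precisely the Goryunov--Mond sheaf-theoretic argument from the first of these references: the resolution of $\underline{\QQ}_Z$ by alternating pushforwards, the stalkwise identification with the Koszul complex on $\QQ^{\#f^{-1}(y)}$, and the hypercohomology spectral sequence, followed by dualisation over $\QQ$ to pass to homology. All of this is carried out correctly, so your proposal coincides with (indeed, fills in) what the paper only cites.
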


\noindent
For different proofs see \cite{GoryunovMond93}, \cite{Goryunov95}, and
\cite{CisnerosMond18}.\\

While computing the spectral sequence explicitly might be quite involved,
we can give estimates for the vanishing of the rational homology of $Z$
from information about the first page.

\subsection{Proof of Theorem \ref{thm:MainTheorem}}
Let $f : (\CC^n,0) \to (\CC^N,0)$ be a finite, dimensionally correct
holomorphic map germ of corank $1$ with non-isolated instability of
dimension $d = \dim \Inst(f)$ and 
\[
  F = (f_u,u) : X \times \Delta  \to Y \times \Delta 
\] a representative of a topological stabilization thereof. Choose
a Milnor ball $B = \rho^{-1}( [0,\varepsilon] ) \subset X$ for this
representative according to Sections \ref{sec:FibrationTheorems} and
\ref{sec:InducedDeformationsOfMultiplePointSpaces} so that, possibly after
shrinking $\Delta $, we can pass to the associated proper representatives $h :
\overline Z \to \Delta $ and $u : \overline {D^k(F)} \to \Delta $.

From Theorem \ref{thm:MararMondCurvilinearStabilityCriterion}, Corollary
\ref{cor:CurvilinearMapsHaveCompleteIntersectionsAsMultPtSp}, and
Corollary \ref{cor:EstimateOnSingularLocusForMultPtSpc} we know that
the induced deformations in the multiple point spaces \[
  u : \overline {D^k(F)} \to \Delta 
\] are smoothings of complete intersection singularities $(D^k(f),0)
= (u^{-1}( \{0\} ),0)$ with singular locus of dimension $\leq d$.
From Corollary \ref{cor:KatoMatsumotoGeneralization} we know that the
associated Milnor fibers $\overline D^k(f_s)$ have the homotopy type of a
bouquet of spheres $S^q$ of dimensions $q$ within the range from $\dim
(D_0^k,0) - d$ to $\dim (D_0^k,0)$. Since the alternating homology
groups $H_q^{\Alt}(\overline D^k(f_s),\QQ)$ are subgroups of $H_q(\overline
D^k(f_s),\QQ)$, Equation (\ref{eqn:IdentificationAlternatingHomology}), 
this gives immediate bounds on the nonzero terms of the
first page of the spectral sequence from Theorem 
\ref{thm:SpectralSequenceForAlternatingHomology}.

Observe that, as long as the inequality 
\[
  k<\left\lceil \frac{N-d}{N-n} \right\rceil
\] 
is satisfied, the singular locus of $(D^k(f),0)$ is
of strictly smaller dimension than $\dim (D^k(f),0)$, and hence
all the $\overline D^k(f_s)$ are connected. For any $\sigma\in S_k$, the points $\sigma(x)$ and $x$ can be connected by a path $\gamma$, so that $[x]-\sigma_\#[x]=\partial \gamma$. This implies that the action of $S_k$ is trivial on the degree zero homology, and hence that the alternating homology vanishes in degree zero.
On the other
hand, within the range 
\[
  \left\lceil \frac{N-d}{N-n} \right\rceil \leq k \leq
  \left\lfloor \frac{N}{N-n} \right\rfloor
\] 
the $\overline D^k(f_s)$ may have several
components, which allows alternating homology in degree zero.

The result follows from the computation of the spectral sequence: Each
homology group $H_r(\overline Z_s)$ has a filtration, whose successive
quotients are isomorphic to quotients of subspaces of entries in the
$r$-th anti-diagonal in (\ref{eqn:SpectralSequenceFirstPage}). The
given bounds on nonzero homology groups come from the first and the
last possible intersection of the anti-diagonals with the area of possibly
nonzero terms.

\begin{equation}
  \begin{array}{ccccccccccccccccccccccccccccc}
 \multicolumn{1}{c|}{}	&
    \vdots & \vdots & \vdots & \vdots & & & & & \\
  \multicolumn{1}{c|}{2n - N +1} &
    \times & \times & \times & \times & & & & & \\
  \multicolumn{1}{c|}{2n - N} &
    \times & \bullet & \times & \times & & & & & \\
  \multicolumn{1}{c|}{	  \vdots} &
    \vdots & \vdots & \vdots & \vdots & & \vdots & \vdots & \vdots & \\
  \multicolumn{1}{c|}{	  2n - N - d + 1} &
    \times & \bullet & \bullet & \times & & \times & \times & \times &
    \cdots \\
  \multicolumn{1}{c|}{	  2n - N - d }&
    \times & \bullet & \vdots & \times & & \times & \times & \times &
    \cdots \\
   \multicolumn{1}{c|}{   2n - N - d -1} &
    \times & \times & \bullet & \bullet & & \times & \times & \times &
    \cdots \\
  \multicolumn{1}{c|}{	  \vdots }&
    \vdots & \vdots & \bullet & \vdots & & \times & \times & \times &
    \cdots \\
  \multicolumn{1}{c|}{	  4} &
    \times & \times & \times & \bullet & & \bullet & \times & \times &
    \cdots \\
  \multicolumn{1}{c|}{	  3} &
    \times & \times & \vdots & \bullet & & \vdots & \times & \times &
    \cdots \\
  \multicolumn{1}{c|}{	  2} &
    \times & \times & \times & \times & & \bullet & \bullet & \times &
    \cdots \\
  \multicolumn{1}{c|}{	  1} &
    \times & \times & \times & \vdots & & \bullet & \vdots & \times &
    \cdots \\
  \multicolumn{1}{c|}{	  0} &
    \bullet & \times & \times & \times & & \times & \bullet & \bullet &
    \times \\
\hline	\multicolumn{1}{c|}{}
    & 1 & 2 & 3 & 4 & \cdots & \left\lceil \frac{N-d}{N-n} \right\rceil -1
    & \cdots & \left\lfloor \frac{N}{N-n} \right\rfloor &
  \end{array}
  \label{eqn:SpectralSequenceFirstPage}
\end{equation}

\noindent
The vertical index in this diagram is $q$, the horizontal one $k$. 
We write $\times$ for a zero term and $\bullet$ for a possibly nonzero term.
\qed
\subsection{Revisiting an example} Consider the deformations $f_{a,b,c}$
of $B_\infty$, given by \[(x,u)\mapsto (x^2, u^2 x+a x+bx^3+c x^5,u),\]
as in Example \ref{exDeformationsBInfty}.  The divided differences
of the function $f_2(x,u)=x^2$ are \[f_2[x,x',u]=x+x',\text{ and }
f_2[x,x',x'',u]=1.\] Therefore $D^r(f)=\emptyset$, for $r\geq 3$. The
coordinate $x'$ may be eliminated, replacing it by $-x$. This takes
$D^2(f_{a,b,c})$ isomorphically to the plane curve \[u^2+b x^2+c x^4=-a,\]
and turns the action of the generator of $S_2$ on $D^2(f_{a,b,c})$ into
$x\mapsto -x$.	Writing \[A_0=H_0^{\Alt}(D^{2}(f_{a,b,c}))\quad \text{
and }\quad A_1=H_1^{\Alt}(D^{2}(f_{a,b,c})),\] the first page of the
spectral sequence looks as follows: 
\begin{equation*}
  \begin{array}{rcccccccc} \multicolumn{1}{c|}{\vdots}	&
     & & &
    \\
  \multicolumn{1}{c|}{\times}  &
    \vdots & \vdots & \vdots & \\
     \multicolumn{1}{c|}{H_2(Y)}&
    \times & \times & \times & \cdots \\
     \multicolumn{1}{c|}{H_1(Y)}&
    \times & A_1 & \times & \cdots \\
     \multicolumn{1}{c|}{H_0(Y)}&
    \QQ & A_0 & \times &
  \cdots
   \\
\cline{1-5}	\multicolumn{1}{c|}{}&
     X	&D^2 &
    \emptyset & \cdots &
	\\
  \end{array}
\end{equation*}

Now we study the double point spaces of the singularities considered
in Example \ref{exDeformationsBInfty}. These are depicted in Figure
\ref{fig:BifDgrmBinfty}, placed as in Figure \ref{fig:Example1}.
\begin{figure}[h]
  \centering \includegraphics[scale=1.2,trim=0.5cm 0 0 0cm]{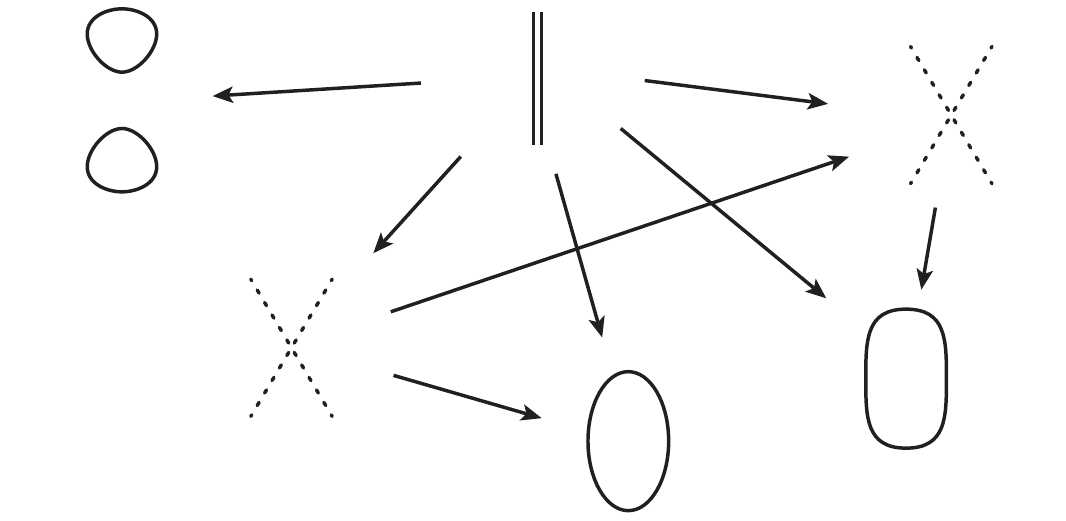}
  \caption{The spaces $D^2(f)$ of some singularities adjacent to
  $B_\infty$. Dashed lines correspond to phenomena which are not visible
  over the reals.} \label{fig:BifDgrmBinfty}
\end{figure}

Take the  map $f_{t/10,-t,t}$ on the top-left corner of the figure. Its
double point space has two connected components $C_1$ and $C_2$, each
containing a $1$-cycle. Each of these components is the image of the
other by the generator $\sigma\in S_2$. Consequently, for any point $a\in
C_1$ and any $1$-cycle generating $H_1(C_1)$, the cycles $a, \sigma a$  and $c,
\sigma c$ generate $H_0(D^2)$ and $H_1(D^2)$, respectively.  It follows
that $H_0^{\Alt}(D^2)$ and $H_1^{\Alt}(D^2)$ are generated, respectively,
by the non-zero cycles $a-\sigma a$ and $c-\sigma c$. Hence $A_0\cong \QQ$
and $A_1\cong \QQ$ are responsible for the homology in dimensions one and
two we observed.

Now look at the double spaces of the other maps. Since they are
connected, their homology in dimension 0 is generated by any point,
say $a\in D^2$. Since $a$ and $\sigma a$ are contained in the same
connected component, they represent the same homology class. This means
that $H_0(D^2)$ is $S_2$-invariant, hence $A_0=0$ for all these maps.

For 1-dimensional homology, observe that $D^2$ is contractible for
$B_1$ (top-right), while its stable perturbation (bottom-right) contains
a 1-dimensional cycle that switches orientation with the action of
$S_2$. This cycle is the generator of $A_1=H_1^{\Alt}(D^2)$ for the
mentioned stabilization. Similar considerations can be made for the
remaining singularities in the example. Recall that the difference between the two stable perturbations relies on the choice of good representatives, not depicted here.

The argument we gave to study $A_0=H_0^{\Alt}(D^{2}(f_{a,b,c}))$ for
the left-top example generalizes immediately to the following result:
\begin{proposition} A stable perturbation of a map germ $\CC^n\to \CC^{m}$,
with $2\leq n<m$ has non-trivial 1 dimensional homology if and only
if $D^2$ has at least one connected component not fixed by $S_2$.
\end{proposition}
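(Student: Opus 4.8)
The plan is to read the statement off the image-computing spectral sequence of Theorem~\ref{thm:SpectralSequenceForAlternatingHomology} applied to the stable map $f_s\colon X_s\to Y$, where $X_s=D^1(f_s)$ is the (contractible) Milnor ball in the smooth, \emph{connected} source and $Z_s=f_s(X_s)$. Since $X_s$ is contractible we have $E^1_{0,q}=H^{\Alt}_q(D^1)=H_q(X_s)=0$ for $q>0$, so $E^\infty_{0,1}=0$. On the antidiagonal $p+q=1$ the only possibly nonzero entry is thus $E^1_{1,0}=H^{\Alt}_0(D^2)$, and because the differentials $d^r$ for $r\geq 2$ entering or leaving the spot $(1,0)$ come from, respectively go to, positions of negative index, we obtain $\tilde H_1(Z_s,\QQ)\cong E^\infty_{1,0}=E^2_{1,0}$.

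Next I would compute $H^{\Alt}_0(D^2)=\Alt H_0(D^2)$, the image of the alternating projector in $H_0(D^2)=\QQ[\pi_0 D^2]$. This is the permutation $\QQ[S_2]$-module on the set of connected components of $D^2$, so it splits over $S_2$-orbits: a component fixed by $S_2$ gives a trivial summand with vanishing alternating part, while a pair $\{C,\sigma C\}$ of distinct components swapped by $\sigma$ gives a regular representation, whose alternating part is the line $\QQ\cdot([C]-[\sigma C])$. Hence $H^{\Alt}_0(D^2)\neq 0$ precisely when some connected component of $D^2$ is not fixed by $S_2$, with $\dim H^{\Alt}_0(D^2)$ counting the swapped pairs. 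This already gives one implication: if $S_2$ fixes every component of $D^2$, then $E^1_{1,0}=0$ and $\tilde H_1(Z_s,\QQ)=0$.

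For the converse it suffices to show that $E^2_{1,0}=H^{\Alt}_0(D^2)/\operatorname{im}\bigl(d^1\colon H^{\Alt}_0(D^3)\to H^{\Alt}_0(D^2)\bigr)$ is nonzero as soon as $H^{\Alt}_0(D^2)$ is. The outgoing differential $d^1\colon H^{\Alt}_0(D^2)\to H^{\Alt}_0(D^1)=H_0(X_s)$ vanishes: on a component $C$ of $D^2$ the operator $\varphi$ sends $[C]$ to $\pm([\epsilon^{1,2}(C)]-[\epsilon^{2,2}(C)])$, and both images lie in the single component of the connected source $X_s$. (This is the one point where connectedness of the source --- automatic for a mono-germ --- is genuinely used; for disconnected sources, such as the union of the three coordinate $2$-planes in $\CC^3$, this step and the conclusion break down.) Thus $\ker\bigl(d^1\colon E^1_{1,0}\to E^1_{0,0}\bigr)=H^{\Alt}_0(D^2)$, and everything hinges on the incoming $d^1$ from $D^3$ not being onto.

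This non-surjectivity is the main obstacle. It is automatic whenever $D^3(f_s)=\emptyset$ --- for instance when $3n<2m$, and in particular in the family of Example~\ref{exDeformationsBInfty}, where $f_2[x,x',x'',u]=1$ already forces $D^3=\emptyset$ --- in which case the argument of the $B_\infty$ example carries over verbatim. In general I would use that on alternating chains $\varphi$ agrees, up to a nonzero scalar, with the induced map of ``drop the last coordinate'' $(\epsilon^{k,k})^\sharp$ (the maps $\epsilon^{i,k}$ differ from $\epsilon^{k,k}$ by coordinate permutations, which act on $C^{\Alt}$ by their sign), reducing the problem to the image of $H^{\Alt}_0(D^3)\to H^{\Alt}_0(D^2)$ induced by $\epsilon^{3,3}$. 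A local analysis near a triple point of $Z_s$ --- at which $D^2$ acquires six local components forming three swapped pairs, and $D^3$ a free $S_3$-orbit --- shows that the alternating class of that triple point is sent to a single vector inside the three-dimensional space of local alternating classes of $D^2$; patched into the global alternating permutation complex this should keep the image of the incoming $d^1$ of positive codimension in $H^{\Alt}_0(D^2)$ whenever the latter is nonzero, which finishes the proof. The global bookkeeping of these local contributions is the delicate part.
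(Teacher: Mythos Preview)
Your approach via the image-computing spectral sequence is exactly the route the paper takes, and your computation of $H_0^{\Alt}(D^2)$ in terms of swapped pairs of components is precisely the content of the paper's remark that the example argument ``generalizes immediately.'' You have in fact gone further than the paper: the paper's argument is drawn entirely from the $B_\infty$ family, where $D^3=\emptyset$ because $f_2[x,x',x'',u]=1$, so the incoming differential $d^1\colon H_0^{\Alt}(D^3)\to H_0^{\Alt}(D^2)$ never arises there, and the paper does not address it when stating the general proposition.

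Your identification of this incoming $d^1$ as the crux of the ``if'' direction is correct, but your proposed resolution does not close the gap. The local picture you describe --- six local branches of $D^2$ near a triple point, with the triple point contributing a single alternating vector into a locally three-dimensional space --- is a statement about \emph{local} connected components. If globally $D^2(f_s)$ has just one swapped pair $\{A,\sigma A\}$, then three of the six local branches lie in $A$ and three in $\sigma A$; a direct computation of $\varphi$ on the alternating class of the triple point then yields a nonzero multiple of $[A]-[\sigma A]$, so the incoming $d^1$ would be surjective onto the one-dimensional $H_0^{\Alt}(D^2)$ and $E^2_{1,0}$ would vanish. Your local codimension count therefore does not survive globalization. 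The real question is whether such a configuration can be realized by a stable perturbation of a monogerm, and neither you nor the paper supplies an argument ruling it out. In short: the ``only if'' direction is fine, the case $D^3(f_s)=\emptyset$ is fine, but the general ``if'' direction remains a genuine gap in both your argument and the paper's.
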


%\printbibliography
\bibliography{sources}

\bibliographystyle{amsplain}
\end{document}